\newtheorem{theorem}{Theorem}
\newtheorem{lemma}[theorem]{Lemma}
\newtheorem{corollary}[theorem]{Corollary}
\newtheorem{conjecture}[theorem]{Conjecture}
\def\maxomega{14}
\def\maxtaula{100}
\renewcommand\leq\leqslant
\renewcommand\geq\geqslant
\title{Ordinarization Transform of a Numerical Semigroup and Semigroups with a Large Number of Intervals}
\author{Maria Bras-Amor\'os}
\begin{document}

\maketitle

\begin{abstract}
A numerical semigroup is said to be ordinary if it has all its gaps in a row.
Indeed, it contains zero and all integers from a given positive one.
One can define a simple operation on a non-ordinary semigroup, which we call here the ordinarization transform, by 
removing its smallest non-zero non-gap (the multiplicity) and adding its largest gap (the Frobenius number).
This gives another numerical semigroup and by repeating this transform several times we end up with an ordinary semigroup. The genus, that is, the number of gaps, is kept constant in all the transforms.

This procedure allows the construction of a tree for each given genus containing all semigrpoups of that genus and rooted in the unique ordinary semigroup of that genus. 
We study here the regularity of these trees and the number of semigroups at each depth. For some depths it is proved that the number of semigroups increases with the genus and it is conjectured that this happens at all given depths.
This may give some lights to a former conjecture saying that the number of semigroups of a given genus increases with the genus.

We finally give an identification between semigroups at a given depth in the ordinarization tree and semigroups with a given (large) number of gap intervals and we give an explicit characterization of those semigroups.
\end{abstract}

\keywords{Numerical semigroup, semigroup tree, Fre\u\i man's theorem, sumsets.}

\section{Introduction}
Let ${\mathbb N}_0$ 
denote the set of non-negative integers.
A numerical semigroup is a subset of ${\mathbb N}_0$ 
which is closed under addition, contains $0$, and its complement in ${\mathbb N}_0$ is finite.
One main reference for numerical semigroups is \cite{NS}.
For a numerical semigroup $\Lambda$ the elements in ${\mathbb N}_0\setminus\Lambda$ are called {\em gaps} and the number of gaps is the {\em genus} of the semigroup. The largest gap is called the {\em Frobenius number}.
The {\em multiplicity} $m$ of a numerical semigroup is its first non-zero non-gap. 
A numerical semigroup different than ${\mathbb N}_0$ is said to be {\em ordinary} if its gaps are all in a row.

It was conjectured in \cite{Bras:Fibonacci} that the number $n_g$ of numerical semigroups of genus $g$ asymptotically behaves like the Fibonacci numbers. More precisely, it was conjectured that $n_g\geq n_{g-1}+n_{g-2}$, that the limit of the ratio $\frac{n_g}{n_{g-1}+n_{g-2}}$ is $1$ and so that the limit of the ratio
$\frac{n_g}{n_{g-1}}$ is the the golden ratio $\phi=\frac{1+\sqrt{5}}{2}$.
Many other papers deal with the sequence $n_g$ 
\cite{Komeda89,Komeda98,bounds,Anna,Stas,sergi,Zhao,BlancoGarciaPuerto,Kaplan}
and recently Alex Zhai gave a proof for the asymptotic Fibonacci-like behavior of $n_g$ \cite{Zhai}.
However, it has still not been proved that $n_g$ is increasing.
In the present work we approach this problem.

All numerical semigroups can be organized in a tree ${\mathscr T}$
whose root is the semigroup ${\mathbb N}_0$ and in which the parent of a numerical semigroup $\Lambda$ of genus $g$  
is the numerical semigroup obtained by adjoining to $\Lambda$ its Frobenius number. 
So, the parent of a numerical semigroup of genus $g$
has genus $g-1$
and all numerical semigroups are in ${\mathscr T}$, at a depth equal to its genus. In particular, $n_g$ is the number of nodes of ${\mathscr T}$ at depth $g$.
This construction was already considered in 
\cite{RoGaGaJi:oversemigroups}.

Here we will see that all numerical semigroups of a given genus $g$ can be organized in a tree ${\mathscr T}_g$ rooted at the unique ordinary semigroup of genus $g$. 
One significant difference between ${\mathscr T}_g$ and ${\mathscr T}$ is that 
the first one has only a finite number of nodes, indeed, it has $n_g$ nodes, while ${\mathscr T}$ is an infinite tree.
We will see some relations between the trees ${\mathscr T}_g$ and ${\mathscr T}$.
We conjecture that the number of numerical semigroups in 
${\mathscr T}_g$ at a given depth 
is at most the number of numerical semigroups in 
${\mathscr T}_{g+1}$ at the same depth.
This conjecture would prove that $n_{g+1}\geq n_{g}$.
Here we 
show
this result for the lowest and largest depths.

We finally study semigroups with a large number of intervals. We prove that if $\lfloor\frac{n}{2}\rfloor\geq \frac{g+2}{3}$, then the set of numerical semigroups of genus $g$ and $n$ intervals of gaps is empty if $n$ and $g$ have different parity and it is exactly the set of numerical semigroups of genus $g$ and depth $\lfloor\frac{n}{2}\rfloor$ in ${\mathscr {T}}_g$ otherwise.
Furthermore we give an explicit description of the form of these semigroups.

 \section{A tree with all semigroups of a given genus}

\subsection{Ordinarization transform and ordinarization number of a numerical semigroup}

 The {\it ordinarization transform} of a non-ordinary semigroup $\Lambda$ with Frobenius number $F$ and multiplicity $m$ is the set $\Lambda'=\Lambda\setminus\{m\}\cup\{F\}$.
 The ordinarization transform of an ordinary semigroup is itself.
 For instance,
 the ordinarization transform 
 of the semigroup $\Lambda=\{0,{\bf 4},5,8,9,10,12,\dots\}$
 is the semigroup $\Lambda'=\{0,5,8,9,10,{\bf 11},12,\dots\}$.
 Note that the genus of the ordinarization transform of a semigroup is the genus of the semigroup.

 We can iterate the ordinarization transform and set $\Lambda''=(\Lambda')'$
 and in general $\Lambda^{(i)}={\Lambda^{(i-1)}}'$.
 It is easy to check that if $\Lambda$ has genus $g$ 
 then there exists an integer $i$ such that
 $\Lambda^{(i)}=\{0,g+1,g+2,\dots\}$.
 If $\Lambda$ is non-ordinary then there exists a unique $i$ with 
 this property and satisfying 
 $\Lambda^{(i-1)}\neq\{0,g+1,g+2,\dots\}$.
 We call this $i$ the {\it ordinarization number} of $\Lambda$.
 By extension, the ordinarization number of an ordinary semigroup is set to be $0$.  
 For instance,
 if $\Lambda=\{0,{\bf 4},5,8,9,10,12,\dots\}$
 then $\Lambda'=\{0,5,8,9,10,{\bf 11},12,\dots\}$ which is not the ordinary semigroup
 while 
 $\Lambda''=\{0,{\bf 7},8,9,10,11,12,\dots\}$ which is the ordinary 
 semigroup of genus $6$. Thus the ordinarization number of $\Lambda$ is $2$.

\begin{lemma}
\label{lemma:equivdef}
The ordinarization number of a numerical semigroup 
of genus $g$ is the number of its non-zero non-gaps which are smaller than or equal to $g$.
\end{lemma}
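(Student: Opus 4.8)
The plan is to track a single invariant, namely the quantity $N(\Lambda)$ defined as the number of non-zero non-gaps of $\Lambda$ that are at most $g$, and to show that each ordinarization transform decreases $N$ by exactly one until the ordinary semigroup is reached, at which point $N=0$. The lemma then amounts to identifying the ordinarization number with $N(\Lambda)$.

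First I would record two bounds for a non-ordinary semigroup $\Lambda$ of genus $g$: its multiplicity satisfies $m\leq g$ and its Frobenius number satisfies $F\geq g+1$. Both follow from the same observation. Since $\Lambda$ has exactly $g$ gaps, if all of $1,\ldots,g$ were gaps then these would be \emph{all} the gaps and $\Lambda$ would be the ordinary semigroup $\{0,g+1,g+2,\ldots\}$. As $\Lambda$ is non-ordinary, at least one element of $\{1,\ldots,g\}$ is a non-gap, forcing $m\leq g$, and correspondingly at least one gap must exceed $g$, forcing $F\geq g+1$.

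The key step is to compute the effect of a single transform on $N$. By definition $\Lambda'=\Lambda\setminus\{m\}\cup\{F\}$ removes the non-gap $m$, which lies in $\{1,\ldots,g\}$ by the bound above and is therefore counted by $N$, and adjoins the non-gap $F$, which exceeds $g$ by the bound above and is therefore not counted by $N$. All other non-gaps are unchanged and the genus is preserved (as noted in the excerpt), so $N(\Lambda')=N(\Lambda)-1$ whenever $\Lambda$ is non-ordinary.

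Finally I would observe that $N(\Lambda)=0$ holds precisely when every non-zero non-gap exceeds $g$; combined with $\Lambda$ having genus $g$, this forces the gaps to be exactly $\{1,\ldots,g\}$, i.e.\ $\Lambda$ is ordinary. Hence, starting from $\Lambda$ with $N(\Lambda)=k$ and iterating the transform produces the strictly decreasing sequence $k,k-1,\ldots,1,0$ of values of $N$, so the ordinary semigroup is reached after exactly $k$ steps and not before. By definition this $k$ is the ordinarization number, and it equals $N(\Lambda)$, as claimed. The proof is short, so the only point demanding care is establishing the pair of inequalities $m\leq g<F$ for every non-ordinary semigroup together with the equivalence between $N=0$ and being ordinary; everything else is the bookkeeping of the decrement $N(\Lambda')=N(\Lambda)-1$.
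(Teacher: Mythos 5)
Your proof is correct and takes essentially the same approach as the paper: both identify the ordinarization number with the count of non-zero non-gaps at most $g$, using that a genus-$g$ semigroup is non-ordinary exactly when $m\leq g$, so each transform removes one non-gap counted by this quantity and adds the Frobenius number $F>g$, which is not counted. Your version merely makes explicit the bound $F\geq g+1$ and the decrement bookkeeping that the paper's terser proof leaves implicit.
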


\begin{proof}
A numerical semigroup of genus $g$ 
is non-ordinary if and only if its multiplicity 
is at most $g$.
So, we can transform a numerical semigroup while its multiplicity is 
at most $g$. The number of times that we can transform
a semigroup before 
getting the ordinary semigroup is thus the number of its non-zero
non-gaps which are smaller than or equal to the genus.
\end{proof}

Given a numerical semigroup $\Lambda$ it will be convenient to consider its enumeration $\lambda$ as the unique increasing bijective map between ${\mathbb N}_0$ and $\Lambda$. We will use $\lambda_i$ for $\lambda(i)$.
By the previous lemma, if the ordinarization number of a semigroup is $r$ then 
the non-gaps which are at most $g$ are
$\lambda_0=0, \lambda_1, \dots, \lambda_r$.

\begin{lemma}
The maximum ordinarization number of a semigroup of genus $g$ is~$\lfloor\frac{g}{2}\rfloor$.
\end{lemma}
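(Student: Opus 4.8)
The plan is to use Lemma~\ref{lemma:equivdef} to restate the claim combinatorially and then argue by strong induction on the genus $g$, treating separately whether or not $g$ itself is a non-gap. By Lemma~\ref{lemma:equivdef} the ordinarization number of $\Lambda$ equals the number $r$ of non-zero non-gaps of $\Lambda$ that are at most $g$, that is, $r=\#(\Lambda\cap\{1,\dots,g\})$. Since $\{1,\dots,g\}$ has $g$ elements, the bound $r\le\lfloor g/2\rfloor$ is equivalent to saying that at least half of $1,\dots,g$ are gaps. I would also record attainability separately: for $\Lambda=\langle 2,2g+1\rangle$ the non-gaps up to $g$ are exactly the even numbers $2,4,\dots$, of which there are $\lfloor g/2\rfloor$, so the bound is sharp.

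The heart of the argument is the case $g\in\Lambda$. Here I would use that non-gaps are closed under addition: the translation $s\mapsto s+g$ sends each of the $r$ non-zero non-gaps $s\le g$ to a non-gap lying in $\{g+1,\dots,2g\}$, and this map is injective. Hence $\Lambda$ has at least $r$ non-gaps in $\{g+1,\dots,2g\}$ as well. Now invoke the classical bound $F\le 2g-1$ on the Frobenius number (itself a one-line consequence of the reflection $x\mapsto F-x$, which sends non-gaps of $[0,F]$ injectively to gaps): it guarantees that all $g$ gaps lie in $\{1,\dots,2g-1\}$, so exactly $g$ of the $2g$ integers in $\{1,\dots,2g\}$ are non-gaps. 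Splitting this count across $\{1,\dots,g\}$ and $\{g+1,\dots,2g\}$ gives $r\le g-r$, i.e.\ $r\le g/2$, and since $r$ is an integer, $r\le\lfloor g/2\rfloor$.

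For the case $g\notin\Lambda$ I would pass to the parent of $\Lambda$ in the tree $\mathscr T$, namely $\widehat\Lambda=\Lambda\cup\{F\}$, which has genus $g-1$. Because $F\ge g$, adjoining $F$ does not alter the non-gaps in $\{1,\dots,g-1\}$, and because $g$ is a gap of $\Lambda$ we have $\Lambda\cap\{1,\dots,g\}=\Lambda\cap\{1,\dots,g-1\}$; therefore the ordinarization number of $\Lambda$ equals that of $\widehat\Lambda$. By the induction hypothesis the latter is at most $\lfloor (g-1)/2\rfloor\le\lfloor g/2\rfloor$, finishing this case, the base case $g=0$ being trivial.

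The step I expect to be most delicate is obtaining the \emph{sharp} bound rather than one that is off by one. A crude induction that only used $\mathrm{ord}(\Lambda)\le\mathrm{ord}(\widehat\Lambda)+1$ would overshoot by one whenever $g$ is odd; this is exactly why, in the case $g\in\Lambda$, I replace that estimate by the doubling/translation argument together with $F\le 2g-1$, which pins down $r\le g/2$ on the nose. The remaining bookkeeping (that the translate lands in $\{g+1,\dots,2g\}$ and that the two half-intervals partition the non-gap count) is routine.
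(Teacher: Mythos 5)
Your proof is correct. Its central counting step is exactly the paper's: from $F\leq 2g-1$ one gets exactly $g$ non-gaps in $\{1,\dots,2g\}$, of which $g-r$ lie in $\{g+1,\dots,2g\}$; a translation argument then exhibits $r$ distinct non-gaps in that upper half, whence $g-r\geq r$ and $r\leq\lfloor\frac{g}{2}\rfloor$. Your extremal example $\langle 2,2g+1\rangle$ is also the paper's semigroup~(\ref{hyperelliptic}). Where you genuinely diverge is in how the translation is set up. You translate by $g$ itself, which forces the hypothesis $g\in\Lambda$ and obliges you to treat $g\notin\Lambda$ separately, by induction on $g$ through the parent $\Lambda\cup\{F\}$ in ${\mathscr T}$. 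The paper instead translates by $\lambda_r$, the \emph{largest non-gap} $\leq g$: the sums $\lambda_r+\lambda_1,\dots,\lambda_r+\lambda_r=2\lambda_r$ are non-gaps, each exceeds $g$ (a non-gap strictly larger than $\lambda_r$ cannot be $\leq g$, by maximality of $\lambda_r$), and each is at most $2\lambda_r\leq 2g$; this single choice makes the argument uniform, with no case split and no induction. Your inductive detour is nonetheless valid --- since $g\notin\Lambda$ implies $F\geq g$, adjoining $F$ changes nothing in $\{1,\dots,g-1\}$, so $\Lambda$ and $\Lambda\cup\{F\}$ have the same ordinarization number by Lemma~\ref{lemma:equivdef} applied at genus $g$ and $g-1$ respectively --- and it correctly avoids the off-by-one loss you were worried about, because the reduction loses nothing when $g\notin\Lambda$ and the sharp counting is reserved for the case $g\in\Lambda$. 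What the paper's choice of $\lambda_r$ buys is brevity and uniformity; what your version buys is a small structural insight of independent interest, namely that the ordinarization number is invariant under passing to the parent in ${\mathscr T}$ whenever the genus itself is a gap.
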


\begin{proof}
Suppose that the ordinarization number of a semigroup is $r$.
On one hand, since the Frobenius number $F$ is at most $2g-1$,
the total number of gaps 
from $1$ to $2g-1$ is $g$
and so the number of non-gaps  
from $1$ to $2g$ is $g$. 
The number of those non-gaps 
which are larger than the genus is $g-r$.
On the other hand 
$\lambda_r+\lambda_1, \lambda_r+\lambda_2, \dots, 2\lambda_r$
are different non-gaps between $g+1$ and $2g$.
So, the number of non-gaps between $g+1$ and $2g$ is at least $r$.
Putting this altogether we get that $g-r\geq r$ and so $r\leq\frac{g}{2}$.

On the other hand, the bound is attained by 
the
semigroup 
\begin{equation}
\{0,2,4,\dots,2\left\lfloor\frac{g}{2}\right\rfloor,2\left(\left\lfloor\frac{g}{2}\right\rfloor +1\right),\dots,2g,2g+1,2g+2,\dots\}.
\label{hyperelliptic}
\end{equation}
\end{proof}

We want to see that the maximum ordinarization number as stated in the previous lemma is attained only by 
the semigroup in (\ref{hyperelliptic}).
For this purpose we need the next lemma. Its proof has been omitted but can easily be obtained.

 \begin{lemma}
 \label{lemma:arithseqsums}
 Consider a finite subset $A=\{a_1<\dots<a_n\}\subseteq {\mathbb N}_0$.
 \begin{enumerate}
 \item
 The set $A+A$ contains at least $2n-1$ elements
 \item
 The set $A+A$ contains exactly $2n-1$ elements if and only if 
 $a_{i}=a_1+\alpha i$ for all $i$ and for a given positive integer $\alpha$.
 \end{enumerate}
 \end{lemma}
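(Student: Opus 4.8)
The plan is to treat the two parts separately, dispatching the easy lower bound and the easy direction of the equality criterion first, and then concentrating on the only substantive point, namely that equality forces an arithmetic progression. For part (1), writing $A=\{a_1<\dots<a_n\}$, I would simply exhibit an explicit strictly increasing chain of $2n-1$ sums inside $A+A$,
$$2a_1 < a_1+a_2 < \dots < a_1+a_n < a_2+a_n < \dots < a_{n-1}+a_n < 2a_n.$$
Each inequality holds because adding a fixed element preserves strict order, and the two halves overlap only in the middle term $a_1+a_n$, so counting gives $n+(n-1)=2n-1$ distinct elements and hence $|A+A|\ge 2n-1$. The reverse implication in part (2) is equally direct: if $a_i=a_1+(i-1)\alpha$, then $A+A=\{2a_1+\alpha k : 0\le k\le 2n-2\}$ has exactly $2n-1$ elements.

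The substance is the forward implication of part (2): that $|A+A|=2n-1$ forces $A$ to be an arithmetic progression. My approach is to use the boundary chain from part (1) as a skeleton. Since it already lists $2n-1$ distinct elements of $A+A$ and $|A+A|=2n-1$, that chain must exhaust $A+A$; I label its terms $s_0<s_1<\dots<s_{2n-2}$, so that $a_1+a_j=s_{j-1}$ and $a_j+a_n=s_{n-2+j}$ for each $j$.

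The key idea is then to exploit a \emph{second} chain, the $n$ sums $a_2+a_1<a_2+a_2<\dots<a_2+a_n$. Its smallest term is $a_1+a_2=s_1$ and its largest is $a_2+a_n=s_n$, so this chain of $n$ elements is squeezed into the interval $[s_1,s_n]$, which contains exactly the $n$ elements $s_1,\dots,s_n$ of $A+A$. Hence it must coincide with them term by term, giving $a_2+a_j=s_j$ for all $j$. Comparing the two descriptions yields $s_j-s_{j-1}=(a_2+a_j)-(a_1+a_j)=a_2-a_1$ for every $j$, so $s_0,\dots,s_n$ is an arithmetic progression with common difference $\alpha:=a_2-a_1$; feeding this back into $a_1+a_j=s_{j-1}=2a_1+(j-1)\alpha$ gives $a_j=a_1+(j-1)\alpha$, the desired progression.

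I expect the only delicate point to be the squeezing step: one must argue carefully that the second chain is genuinely trapped between $s_1$ and $s_n$ and that no element of $A+A$ can lie strictly between consecutive $s_i$, which is precisely what the equality $|A+A|=2n-1$ guarantees. An alternative route for the lower bound is induction on $n$: since $2a_n$ and $a_{n-1}+a_n$ both exceed $\max(A'+A')=2a_{n-1}$ for $A'=A\setminus\{a_n\}$, they lie in $(A+A)\setminus(A'+A')$, so $|A+A|\ge|A'+A'|+2$; however, reconstructing the arithmetic progression in the equality case along these lines requires extra bookkeeping to pin down the last term $a_n$, which the direct two-chain argument sidesteps.
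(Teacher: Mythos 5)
Your proof is correct. There is, however, nothing in the paper to compare it against: the author explicitly omits the proof of this lemma (``Its proof has been omitted but can easily be obtained''), so your write-up fills a gap rather than paralleling an existing argument. What you give is the classical argument for the basic inverse sumset problem (it is essentially Theorem 1.3 in Nathanson's book, which the paper cites in connection with Fre\u{\i}man's theorem): the boundary chain $2a_1<a_1+a_2<\dots<a_1+a_n<a_2+a_n<\dots<2a_n$ yields the lower bound, and in the equality case the second chain $a_2+a_1<a_2+a_2<\dots<a_2+a_n$, being $n$ distinct elements of $A+A$ trapped in $[s_1,s_n]$, must coincide term by term with $s_1,\dots,s_n$, forcing the constant gap $s_j-s_{j-1}=a_2-a_1$ and hence the arithmetic progression. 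The squeezing step you flagged as delicate is in fact airtight as you stated it: $A+A\cap[s_1,s_n]=\{s_1,\dots,s_n\}$ has exactly $n$ elements precisely because $|A+A|=2n-1$ forces the first chain to exhaust $A+A$.

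Two small points worth recording. First, your forward argument uses $a_2$ and so tacitly assumes $n\geq 2$; for $n=1$ the equality $|A+A|=1=2n-1$ always holds and the progression condition is vacuous, so the case costs nothing, but it should be mentioned. Second, the lemma as printed says $a_i=a_1+\alpha i$ for all $i$, which at $i=1$ would force $\alpha=0$; the intended statement is clearly $a_i=a_1+\alpha(i-1)$, which is exactly what you prove, so you have silently corrected an index slip in the paper's formulation rather than deviated from it.
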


 \begin{lemma}
 Let $g>0$.
 The unique numerical semigroup of genus $g$ and 
 ordinarization number $\lfloor\frac{g}{2}\rfloor$ is $\{0,2,4,\dots,2g,2g+1,2g+2,\dots\}$.
 \end{lemma}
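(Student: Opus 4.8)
The plan is to pin down the set $A=\{\lambda_0,\lambda_1,\dots,\lambda_r\}$ of non-gaps of $\Lambda$ that are at most $g$, where by Lemma~\ref{lemma:equivdef} the ordinarization number $r=\lfloor\frac{g}{2}\rfloor$ equals the number of nonzero such non-gaps, so $|A|=r+1$. The goal is to show that $A$ must be the arithmetic progression $\{0,2,4,\dots,2r\}$, equivalently that $\Lambda$ has multiplicity $2$. Once multiplicity $2$ is established everything else is automatic: a semigroup of multiplicity $2$ contains every positive even integer, so all its gaps are odd; being closed under adding $2$, its odd non-gaps form a final segment of the odd numbers, hence the gaps are exactly $1,3,5,\dots,2g-1$ and $\Lambda=\{0,2,4,\dots,2g,2g+1,2g+2,\dots\}$, which is then forced and unique.

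To force the arithmetic-progression structure I would squeeze the sumset $A+A\subseteq\Lambda$ between two opposite estimates. For the lower bound, Lemma~\ref{lemma:arithseqsums}(1) gives $|A+A|\geq 2(r+1)-1=2r+1$. For the upper bound I reuse the counting from the proof of the maximum-ordinarization-number lemma: since $A\subseteq A+A$ (add $0$) and every non-gap at most $g$ already lies in $A$, we have $(A+A)\cap[0,g]=A$, contributing $r+1$ elements; and since $\max(A+A)=2\lambda_r\leq 2g$, the remaining elements of $A+A$ lie in $[g+1,2g]$, where there are only $g-r$ non-gaps in all. Hence $|A+A|\leq (r+1)+(g-r)=g+1$.

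When $g$ is even, $g=2r$, these bounds read $2r+1\leq|A+A|\leq 2r+1$, so equality holds and $|A+A|=2|A|-1$. Lemma~\ref{lemma:arithseqsums}(2) then forces $A$ to be an arithmetic progression; writing $A=\{0,d,2d,\dots,rd\}$ and using $rd=\lambda_r\leq g=2r$ gives $d\leq 2$, while $d=1$ is impossible since $1\in\Lambda$ would make $\Lambda={\mathbb N}_0$ of genus $0$. Thus $d=2$, so $A=\{0,2,\dots,g\}$ and $\Lambda$ has multiplicity $2$, completing this case.

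The main obstacle is the odd case $g=2r+1$, where the very same squeeze only yields $2r+1\leq|A+A|\leq 2r+2$, a slack of one, so Lemma~\ref{lemma:arithseqsums}(2) no longer applies directly and an arithmetic progression is not immediately forced. The delicate point is the near-extremal configuration $|A+A|=2|A|$: one must control the single ``extra'' non-gap in $[g+1,2g]$ lying beyond the $r$ forced sums $\lambda_r+\lambda_1,\dots,2\lambda_r$, and this seems to require a finer structural (Freiman-type) description of sets whose doubling exceeds the minimum by exactly one, rather than the clean characterization of minimal doubling available here. I therefore expect the odd-genus analysis, and in particular ruling out the apparent one extra degree of freedom, to be where the real work of the proof concentrates.
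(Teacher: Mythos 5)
Your even-genus case is correct and coincides exactly with the paper's argument (same squeeze, same use of Lemma~\ref{lemma:arithseqsums}, same exclusion of common difference $1$). The genuine gap is that for odd $g$ you prove nothing: you only diagnose why the squeeze leaves a slack of one. Moreover, the idea you are missing for the bulk of the odd case is not a refined Fre\u\i man-type classification of sets with doubling $2\#A$; it is an elementary tightening of your upper bound. The paper splits the odd case according to whether $g\in\Lambda$. If $g\notin\Lambda$, then $A\subseteq[0,g-1]$, so $A+A\subseteq[0,2g-2]$, while $2g$ is always a non-gap (the Frobenius number is at most $2g-1$); hence $A+A$ must fit inside the non-gaps of $[0,2g-1]$, of which there are only $g=2\#A-1$. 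The squeeze closes again, the second item of Lemma~\ref{lemma:arithseqsums} applies, and the argument finishes as in the even case. In other words, your count $\#(A+A)\leq g+1$ wastes exactly one element, namely $2g$, which can never be a sum of two elements of $A$ once $g\notin\Lambda$.

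That said, your instinct that the odd case conceals real difficulty is sound --- in fact it exposes a flaw in the paper itself. The subcase ``$g$ odd and $g\in\Lambda$'' is handled neither by you nor by the paper, and there the statement is actually false for small genus: $\{0,3,4,6,7,\dots\}$ and $\{0,3,5,6,7,\dots\}$ have genus $3$ and ordinarization number $1=\lfloor 3/2\rfloor$, and $\{0,4,5,8,9,10,11,12,\dots\}$ has genus $5$ and ordinarization number $2=\lfloor 5/2\rfloor$; this agrees with the paper's own Table~\ref{t:experimentalresults}, which records $n_{3,1}=3$ and $n_{5,2}=2$. For odd $g\geq 7$ the subcase is vacuous, but proving that does require the Fre\u\i man-type input you anticipated: since $\#A=\frac{g+1}{2}$ and $\#(A+A)\leq g+1\leq 3\#A-4$ when $g\geq 7$, Theorem~\ref{t:freiman} places $A$ inside an arithmetic progression of length at most $\frac{g+3}{2}$; its difference $d$ divides every element of $A$ (because $0\in A$), in particular the odd number $g$, so $d\neq 2$; $d=1$ is impossible because so short a progression cannot contain both $0$ and $g$; and $d\geq 3$ is impossible because $[0,g]$ contains fewer than $\frac{g+1}{2}$ multiples of $d$. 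So the lemma as stated needs the hypothesis ``$g$ even or $g\geq 7$,'' and a complete proof needs both the window-tightening trick above (the step missing from your attempt) and this Fre\u\i man step (the step missing from the paper's).
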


\begin{proof}
Suppose that the ordinarization number of $\Lambda$ is $\lfloor\frac{g}{2}\rfloor$.
Since $\lambda_{\lfloor\frac{g}{2}\rfloor}\leq g$,
we know that the set of all non-gaps between $0$ and $2g$ 
must contain all the sums 
$\{\lambda_i+\lambda_j:0\leq i,j\leq \lfloor\frac{g}{2}\rfloor\}$.
But the number of non-gaps between $0$ and $2g$ is $g+1$,
while by 
Lemma~\ref{lemma:arithseqsums} 
the set of sums above
has at least $2\lfloor\frac{g}{2}\rfloor+1$ elements.

If $g$ is even then 
by 
the second item of
Lemma~\ref{lemma:arithseqsums}
we get that 
$\lambda_i=i\lambda_1$ 
for $i\leq\frac{g}{2}$.
Now $\lambda_{\frac{g}{2}}= 
\frac{g}{2}\lambda_1\leq g$ means that
$\lambda_1\leq 2$. If $\lambda_1=1$ this contradicts
$g>0$.
So,
$\lambda_i=2i$ 
for $0\leq i\leq\frac{g}{2}$ and the 
remaining non-gaps 
between $g+1$ and $2g$ are necessarily
$\lambda_i=2i$ 
for $i=\frac{g}{2}+1$ to $i=g$.

If $g$ is odd and $g\not\in\Lambda$ then 
we know that the set of all non-gaps between $0$ and $2g-1$ 
must contain all the sums 
$\{\lambda_i+\lambda_j:0\leq i,j\leq \lfloor\frac{g}{2}\rfloor\}$.
But the number of non-gaps between $0$ and $2g-1$ is $g$,
while by 
Lemma~\ref{lemma:arithseqsums} 
the set of sums above
has at least $2\lfloor\frac{g}{2}\rfloor+1=g$ elements
and we can argue as before.
\end{proof}

\subsection{The tree \boldmath{${\mathscr T}_g$}}

 The definition of the ordinarization transform of a numerical semigroup allows the construction of a tree
 ${\mathscr T}_g$ on the set of all numerical semigroups of a given genus rooted at the unique ordinary semigroup of this genus, 
 where the parent of a semigroup is its ordinarization transform and the descendants of a semigroup are the semigroups obtained by taking away 
 a generator larger than the Frobenius number and adding a new non-gap smaller than the multiplicity in a licit place.
To illustrate this construction with an example 
in Figure~\ref{fig:t6} we depicted ${\mathscr T}_6$.
 The depth of a numerical semigroup of genus $g$ in ${\mathscr T}_g$ is then its ordinarization number.

\begin{figure}
\begin{center}
\compatiblegastexun
\setvertexdiam{2}
\letvertex O0=(10,170)
\letvertex O1=(50,170)
\letvertex O2=(90,170)
\letvertex O3=(130,170)
\letvertex A0=(10,85)
\letvertex A0cua=(24,85)
\letvertex A101=(50,10)
\letvertex A101cap=(36,10)
\letvertex A102=(50,20)
\letvertex A102cua=(64,20)
\letvertex A102cap=(36,20)
\letvertex A103=(50,50)
\letvertex A103cua=(64,50)
\letvertex A103cap=(36,50)
\letvertex A104=(50,70)
\letvertex A104cap=(36,70)
\letvertex A105=(50,90)
\letvertex A105cap=(36,90)
\letvertex A105cua=(64,90)
\letvertex A106=(50,100)
\letvertex A106cap=(36,100)
\letvertex A107=(50,110)
\letvertex A107cap=(36,110)
\letvertex A108=(50,120)
\letvertex A108cap=(36,120)
\letvertex A109=(50,130)
\letvertex A109cap=(36,130)
\letvertex A110=(50,140)
\letvertex A110cap=(36,140)
\letvertex A111=(50,150)
\letvertex A111cap=(36,150)
\letvertex A112=(50,160)
\letvertex A112cap=(36,160)
\letvertex A21=(90,15)
\letvertex A21cap=(76,15)
\letvertex A22=(90,25)
\letvertex A22cap=(76,25)
\letvertex A23=(90,35)
\letvertex A23cap=(76,35)
\letvertex A24=(90,45)
\letvertex A24cap=(76,45)
\letvertex A24cua=(104,45)
\letvertex A25=(90,55)
\letvertex A25cap=(76,55)
\letvertex A26=(90,65)
\letvertex A26cap=(76,65)
\letvertex A27=(90,75)
\letvertex A27cap=(76,75)
\letvertex A28=(90,85)
\letvertex A28cap=(76,85)
\letvertex A29=(90,95)
\letvertex A29cap=(76,95)
\letvertex A3=(130,45)
\letvertex A3cap=(116,45)
\resizebox{\textwidth}{!}{
\begin{picture}(150,150)(-10,10)
\drawvertex(O0){\makebox(0,0){\scalebox{1.5}{ord. number $0$}}}
\drawvertex(O1){\makebox(0,0){\scalebox{1.5}{ord. number $1$}}}
\drawvertex(O2){\makebox(0,0){\scalebox{1.5}{ord. number $2$}}}
\drawvertex(O3){\makebox(0,0){\scalebox{1.5}{ord. number $3$}}}
{\tiny
\drawvertex(A0){\makebox(0,0){$\{0,7,8,9,10,11,12,\dots\}$}}
\node[fillcolor=White](a00)(-4,85){0}
\node[fillcolor=Gray](a01)(-2,85){}
\node[fillcolor=Gray](a02)(0,85){}
\node[fillcolor=Gray](a03)(2,85){}
\node[fillcolor=Gray](a04)(4,85){}
\node[fillcolor=Gray](a05)(6,85){}
\node[fillcolor=Gray](a06)(8,85){}
\node[fillcolor=White](a07)(10,85){7}
\node[fillcolor=White](a08)(12,85){8}
\node[fillcolor=White](a09)(14,85){9}
\node[fillcolor=White](a010)(16,85){10}
\node[fillcolor=White](a011)(18,85){11}
\node[fillcolor=White](a012)(20,85){12}
\node[fillcolor=White](a013)(22,85){13}
\node[fillcolor=White](a014)(24,85){\dots}

\drawvertex(A101){\makebox(0,0){$\{0, 4, 8, 9, 10, 11, 12,\dots\}$}} 
\node[fillcolor=White](a00)(36,10){0}
\node[fillcolor=Gray](a1011)(38,10){}
\node[fillcolor=Gray](a1012)(40,10){}
\node[fillcolor=Gray](a1013)(42,10){}
\node[fillcolor=White](a1014)(44,10){4}
\node[fillcolor=Gray](a1015)(46,10){}
\node[fillcolor=Gray](a1016)(48,10){}
\node[fillcolor=Gray](a1017)(50,10){}
\node[fillcolor=White](a1018)(52,10){8}
\node[fillcolor=White](a1019)(54,10){9}
\node[fillcolor=White](a10110)(56,10){10}
\node[fillcolor=White](a10111)(58,10){11}
\node[fillcolor=White](a10112)(60,10){12}
\node[fillcolor=White](a10113)(62,10){13}
\node[fillcolor=White](a10114)(64,10){\dots}

\drawvertex(A102){\makebox(0,0){$\{0, 5, 8, 9, 10, 11, 12,\dots\}$}}
\node[fillcolor=White](a)(36,20){0}
\node[fillcolor=Gray](a)(38,20){}
\node[fillcolor=Gray](a)(40,20){}
\node[fillcolor=Gray](a)(42,20){}
\node[fillcolor=Gray](a)(44,20){}
\node[fillcolor=White](a)(46,20){5}
\node[fillcolor=Gray](a)(48,20){}
\node[fillcolor=Gray](a)(50,20){}
\node[fillcolor=White](a)(52,20){8}
\node[fillcolor=White](a)(54,20){9}
\node[fillcolor=White](a)(56,20){10}
\node[fillcolor=White](a)(58,20){11}
\node[fillcolor=White](a)(60,20){12}
\node[fillcolor=White](a)(62,20){13}
\node[fillcolor=White](a)(64,20){\dots}

\drawvertex(A103){\makebox(0,0){$\{0, 6, 8, 9, 10, 11, 12,\dots\}$}} 
\node[fillcolor=White](a)(36,50){0}
\node[fillcolor=Gray](a)(38,50){}
\node[fillcolor=Gray](a)(40,50){}
\node[fillcolor=Gray](a)(42,50){}
\node[fillcolor=Gray](a)(44,50){}
\node[fillcolor=Gray](a)(46,50){}
\node[fillcolor=White](a)(48,50){6}
\node[fillcolor=Gray](a)(50,50){}
\node[fillcolor=White](a)(52,50){8}
\node[fillcolor=White](a)(54,50){9}
\node[fillcolor=White](a)(56,50){10}
\node[fillcolor=White](a)(58,50){11}
\node[fillcolor=White](a)(60,50){12}
\node[fillcolor=White](a)(62,50){13}
\node[fillcolor=White](a)(64,50){\dots}

\drawvertex(A104){\makebox(0,0){$\{0, 5, 7, 9, 10, 11, 12,\dots\}$}}
\node[fillcolor=White](a)(36,70){0}
\node[fillcolor=Gray](a)(38,70){}
\node[fillcolor=Gray](a)(40,70){}
\node[fillcolor=Gray](a)(42,70){}
\node[fillcolor=Gray](a)(44,70){}
\node[fillcolor=White](a)(46,70){5}
\node[fillcolor=Gray](a)(48,70){}
\node[fillcolor=White](a)(50,70){7}
\node[fillcolor=Gray](a)(52,70){}
\node[fillcolor=White](a)(54,70){9}
\node[fillcolor=White](a)(56,70){10}
\node[fillcolor=White](a)(58,70){11}
\node[fillcolor=White](a)(60,70){12}
\node[fillcolor=White](a)(62,70){13}
\node[fillcolor=White](a)(64,70){\dots}

\drawvertex(A105){\makebox(0,0){$\{0, 6, 7, 9, 10, 11, 12,\dots\}$}} 
\node[fillcolor=White](a)(36,90){0}
\node[fillcolor=Gray](a)(38,90){}
\node[fillcolor=Gray](a)(40,90){}
\node[fillcolor=Gray](a)(42,90){}
\node[fillcolor=Gray](a)(44,90){}
\node[fillcolor=Gray](a)(46,90){}
\node[fillcolor=White](a)(48,90){6}
\node[fillcolor=White](a)(50,90){7}
\node[fillcolor=Gray](a)(52,90){}
\node[fillcolor=White](a)(54,90){9}
\node[fillcolor=White](a)(56,90){10}
\node[fillcolor=White](a)(58,90){11}
\node[fillcolor=White](a)(60,90){12}
\node[fillcolor=White](a)(62,90){13}
\node[fillcolor=White](a)(64,90){\dots}

\drawvertex(A106){\makebox(0,0){$\{0, 4, 7, 8, 10, 11, 12,\dots\}$}} 
\node[fillcolor=White](a)(36,100){0}
\node[fillcolor=Gray](a)(38,100){}
\node[fillcolor=Gray](a)(40,100){}
\node[fillcolor=Gray](a)(42,100){}
\node[fillcolor=White](a)(44,100){4}
\node[fillcolor=Gray](a)(46,100){}
\node[fillcolor=Gray](a)(48,100){}
\node[fillcolor=White](a)(50,100){7}
\node[fillcolor=White](a)(52,100){8}
\node[fillcolor=Gray](a)(54,100){}
\node[fillcolor=White](a)(56,100){10}
\node[fillcolor=White](a)(58,100){11}
\node[fillcolor=White](a)(60,100){12}
\node[fillcolor=White](a)(62,100){13}
\node[fillcolor=White](a)(64,100){\dots}

\drawvertex(A107){\makebox(0,0){$\{0, 5, 7, 8, 10, 11, 12,\dots\}$}}
\node[fillcolor=White](a)(36,110){0}
\node[fillcolor=Gray](a)(38,110){}
\node[fillcolor=Gray](a)(40,110){}
\node[fillcolor=Gray](a)(42,110){}
\node[fillcolor=Gray](a)(44,110){}
\node[fillcolor=White](a)(46,110){5}
\node[fillcolor=Gray](a)(48,110){}
\node[fillcolor=White](a)(50,110){7}
\node[fillcolor=White](a)(52,110){8}
\node[fillcolor=Gray](a)(54,110){}
\node[fillcolor=White](a)(56,110){10}
\node[fillcolor=White](a)(58,110){11}
\node[fillcolor=White](a)(60,110){12}
\node[fillcolor=White](a)(62,110){13}
\node[fillcolor=White](a)(64,110){\dots}

\drawvertex(A108){\makebox(0,0){$\{0, 6, 7, 8, 10, 11, 12,\dots\}$}}
\node[fillcolor=White](a)(36,120){0}
\node[fillcolor=Gray](a)(38,120){}
\node[fillcolor=Gray](a)(40,120){}
\node[fillcolor=Gray](a)(42,120){}
\node[fillcolor=Gray](a)(44,120){}
\node[fillcolor=Gray](a)(46,120){}
\node[fillcolor=White](a)(48,120){6}
\node[fillcolor=White](a)(50,120){7}
\node[fillcolor=White](a)(52,120){8}
\node[fillcolor=Gray](a)(54,120){}
\node[fillcolor=White](a)(56,120){10}
\node[fillcolor=White](a)(58,120){11}
\node[fillcolor=White](a)(60,120){12}
\node[fillcolor=White](a)(62,120){13}
\node[fillcolor=White](a)(64,120){\dots}

\drawvertex(A109){\makebox(0,0){$\{0, 4, 7, 8, 9, 11, 12,\dots\}$}}
\node[fillcolor=White](a)(36,130){0}
\node[fillcolor=Gray](a)(38,130){}
\node[fillcolor=Gray](a)(40,130){}
\node[fillcolor=Gray](a)(42,130){}
\node[fillcolor=White](a)(44,130){4}
\node[fillcolor=Gray](a)(46,130){}
\node[fillcolor=Gray](a)(48,130){}
\node[fillcolor=White](a)(50,130){7}
\node[fillcolor=White](a)(52,130){8}
\node[fillcolor=White](a)(54,130){9}
\node[fillcolor=Gray](a)(56,130){}
\node[fillcolor=White](a)(58,130){11}
\node[fillcolor=White](a)(60,130){12}
\node[fillcolor=White](a)(62,130){13}
\node[fillcolor=White](a)(64,130){\dots}

\drawvertex(A110){\makebox(0,0){$\{0, 6, 7, 8, 9, 11, 12,\dots\}$}}
\node[fillcolor=White](a)(36,140){0}
\node[fillcolor=Gray](a)(38,140){}
\node[fillcolor=Gray](a)(40,140){}
\node[fillcolor=Gray](a)(42,140){}
\node[fillcolor=Gray](a)(44,140){}
\node[fillcolor=Gray](a)(46,140){}
\node[fillcolor=White](a)(48,140){6}
\node[fillcolor=White](a)(50,140){7}
\node[fillcolor=White](a)(52,140){8}
\node[fillcolor=White](a)(54,140){9}
\node[fillcolor=Gray](a)(56,140){}
\node[fillcolor=White](a)(58,140){11}
\node[fillcolor=White](a)(60,140){12}
\node[fillcolor=White](a)(62,140){13}
\node[fillcolor=White](a)(64,140){\dots}

\drawvertex(A111){\makebox(0,0){$\{0, 5, 7, 8, 9, 10, 12,\dots\}$}}
\node[fillcolor=White](a)(36,150){0}
\node[fillcolor=Gray](a)(38,150){}
\node[fillcolor=Gray](a)(40,150){}
\node[fillcolor=Gray](a)(42,150){}
\node[fillcolor=Gray](a)(44,150){}
\node[fillcolor=White](a)(46,150){5}
\node[fillcolor=Gray](a)(48,150){}
\node[fillcolor=White](a)(50,150){7}
\node[fillcolor=White](a)(52,150){8}
\node[fillcolor=White](a)(54,150){9}
\node[fillcolor=White](a)(56,150){10}
\node[fillcolor=Gray](a)(58,150){}
\node[fillcolor=White](a)(60,150){12}
\node[fillcolor=White](a)(62,150){13}
\node[fillcolor=White](a)(64,150){\dots}

\drawvertex(A112){\makebox(0,0){$\{0, 6, 7, 8, 9, 10, 12,\dots\}$}} 
\node[fillcolor=White](a)(36,160){0}
\node[fillcolor=Gray](a)(38,160){}
\node[fillcolor=Gray](a)(40,160){}
\node[fillcolor=Gray](a)(42,160){}
\node[fillcolor=Gray](a)(44,160){}
\node[fillcolor=Gray](a)(46,160){}
\node[fillcolor=White](a)(48,160){6}
\node[fillcolor=White](a)(50,160){7}
\node[fillcolor=White](a)(52,160){8}
\node[fillcolor=White](a)(54,160){9}
\node[fillcolor=White](a)(56,160){10}
\node[fillcolor=Gray](a)(58,160){}
\node[fillcolor=White](a)(60,160){12}
\node[fillcolor=White](a)(62,160){13}
\node[fillcolor=White](a)(64,160){\dots}
%
\drawvertex(A21){\makebox(0,0){$\{0, 4, 5, 8, 9, 10, 12,\dots\}$}}
\node[fillcolor=White](a)(76,15){0}
\node[fillcolor=Gray](a)(78,15){}
\node[fillcolor=Gray](a)(80,15){}
\node[fillcolor=Gray](a)(82,15){}
\node[fillcolor=White](a)(84,15){4}
\node[fillcolor=White](a)(86,15){5}
\node[fillcolor=Gray](a)(88,15){}
\node[fillcolor=Gray](a)(90,15){}
\node[fillcolor=White](a)(92,15){8}
\node[fillcolor=White](a)(94,15){9}
\node[fillcolor=White](a)(96,15){10}
\node[fillcolor=Gray](a)(98,15){}
\node[fillcolor=White](a)(100,15){12}
\node[fillcolor=White](a)(102,15){13}
\node[fillcolor=White](a)(104,15){\dots}

\drawvertex(A22){\makebox(0,0){$\{0, 3, 6, 9, 10, 11, 12,\dots\}$}}
\node[fillcolor=White](a)(76,25){0}
\node[fillcolor=Gray](a)(78,25){}
\node[fillcolor=Gray](a)(80,25){}
\node[fillcolor=White](a)(82,25){3}
\node[fillcolor=Gray](a)(84,25){}
\node[fillcolor=Gray](a)(86,25){}
\node[fillcolor=White](a)(88,25){6}
\node[fillcolor=Gray](a)(90,25){}
\node[fillcolor=Gray](a)(92,25){}
\node[fillcolor=White](a)(94,25){9}
\node[fillcolor=White](a)(96,25){10}
\node[fillcolor=White](a)(98,25){11}
\node[fillcolor=White](a)(100,25){12}
\node[fillcolor=White](a)(102,25){13}
\node[fillcolor=White](a)(104,25){\dots}

\drawvertex(A23){\makebox(0,0){$\{0, 5, 6, 9, 10, 11, 12,\dots\}$}}
\node[fillcolor=White](a)(76,35){0}
\node[fillcolor=Gray](a)(78,35){}
\node[fillcolor=Gray](a)(80,35){}
\node[fillcolor=Gray](a)(82,35){}
\node[fillcolor=Gray](a)(84,35){}
\node[fillcolor=White](a)(86,35){5}
\node[fillcolor=White](a)(88,35){6}
\node[fillcolor=Gray](a)(90,35){}
\node[fillcolor=Gray](a)(92,35){}
\node[fillcolor=White](a)(94,35){9}
\node[fillcolor=White](a)(96,35){10}
\node[fillcolor=White](a)(98,35){11}
\node[fillcolor=White](a)(100,35){12}
\node[fillcolor=White](a)(102,35){13}
\node[fillcolor=White](a)(104,35){\dots}

\drawvertex(A24){\makebox(0,0){$\{0, 4, 6, 8, 10, 11, 12,\dots\}$}}
\node[fillcolor=White](a)(76,45){0}
\node[fillcolor=Gray](a)(78,45){}
\node[fillcolor=Gray](a)(80,45){}
\node[fillcolor=Gray](a)(82,45){}
\node[fillcolor=White](a)(84,45){4}
\node[fillcolor=Gray](a)(86,45){}
\node[fillcolor=White](a)(88,45){6}
\node[fillcolor=Gray](a)(90,45){}
\node[fillcolor=White](a)(92,45){8}
\node[fillcolor=Gray](a)(94,45){}
\node[fillcolor=White](a)(96,45){10}
\node[fillcolor=White](a)(98,45){11}
\node[fillcolor=White](a)(100,45){12}
\node[fillcolor=White](a)(102,45){13}
\node[fillcolor=White](a)(104,45){\dots}

\drawvertex(A25){\makebox(0,0){$\{0, 5, 6, 8, 10, 11, 12,\dots\}$}}
\node[fillcolor=White](a)(76,55){0}
\node[fillcolor=Gray](a)(78,55){}
\node[fillcolor=Gray](a)(80,55){}
\node[fillcolor=Gray](a)(82,55){}
\node[fillcolor=Gray](a)(84,55){}
\node[fillcolor=White](a)(86,55){5}
\node[fillcolor=White](a)(88,55){6}
\node[fillcolor=Gray](a)(90,55){}
\node[fillcolor=White](a)(92,55){8}
\node[fillcolor=Gray](a)(94,55){}
\node[fillcolor=White](a)(96,55){10}
\node[fillcolor=White](a)(98,55){11}
\node[fillcolor=White](a)(100,55){12}
\node[fillcolor=White](a)(102,55){13}
\node[fillcolor=White](a)(104,55){\dots}

\drawvertex(A26){\makebox(0,0){$\{0, 3, 6, 8, 9, 11, 12,\dots\}$}}
\node[fillcolor=White](a)(76,65){0}
\node[fillcolor=Gray](a)(78,65){}
\node[fillcolor=Gray](a)(80,65){}
\node[fillcolor=White](a)(82,65){3}
\node[fillcolor=Gray](a)(84,65){}
\node[fillcolor=Gray](a)(86,65){}
\node[fillcolor=White](a)(88,65){6}
\node[fillcolor=Gray](a)(90,65){}
\node[fillcolor=White](a)(92,65){8}
\node[fillcolor=White](a)(94,65){9}
\node[fillcolor=Gray](a)(96,65){}
\node[fillcolor=White](a)(98,65){11}
\node[fillcolor=White](a)(100,65){12}
\node[fillcolor=White](a)(102,65){13}
\node[fillcolor=White](a)(104,65){\dots}

\drawvertex(A27){\makebox(0,0){$\{0, 4, 6, 8, 9, 10, 12,\dots\}$}}
\node[fillcolor=White](a)(76,75){0}
\node[fillcolor=Gray](a)(78,75){}
\node[fillcolor=Gray](a)(80,75){}
\node[fillcolor=Gray](a)(82,75){}
\node[fillcolor=White](a)(84,75){4}
\node[fillcolor=Gray](a)(86,75){}
\node[fillcolor=White](a)(88,75){6}
\node[fillcolor=Gray](a)(90,75){}
\node[fillcolor=White](a)(92,75){8}
\node[fillcolor=White](a)(94,75){9}
\node[fillcolor=White](a)(96,75){10}
\node[fillcolor=Gray](a)(98,75){}
\node[fillcolor=White](a)(100,75){12}
\node[fillcolor=White](a)(102,75){13}
\node[fillcolor=White](a)(104,75){\dots}

\drawvertex(A28){\makebox(0,0){$\{0, 3, 6, 7, 9, 10, 12,\dots\}$}} 
\node[fillcolor=White](a)(76,85){0}
\node[fillcolor=Gray](a)(78,85){}
\node[fillcolor=Gray](a)(80,85){}
\node[fillcolor=White](a)(82,85){3}
\node[fillcolor=Gray](a)(84,85){}
\node[fillcolor=Gray](a)(86,85){}
\node[fillcolor=White](a)(88,85){6}
\node[fillcolor=White](a)(90,85){7}
\node[fillcolor=Gray](a)(92,85){}
\node[fillcolor=White](a)(94,85){9}
\node[fillcolor=White](a)(96,85){10}
\node[fillcolor=Gray](a)(98,85){}
\node[fillcolor=White](a)(100,85){12}
\node[fillcolor=White](a)(102,85){13}
\node[fillcolor=White](a)(104,85){\dots}

\drawvertex(A29){\makebox(0,0){$\{0, 5, 6, 7, 10, 11, 12\dots\}$}} 
\node[fillcolor=White](a)(76,95){0}
\node[fillcolor=Gray](a)(78,95){}
\node[fillcolor=Gray](a)(80,95){}
\node[fillcolor=Gray](a)(82,95){}
\node[fillcolor=Gray](a)(84,95){}
\node[fillcolor=White](a)(86,95){5}
\node[fillcolor=White](a)(88,95){6}
\node[fillcolor=White](a)(90,95){7}
\node[fillcolor=Gray](a)(92,95){}
\node[fillcolor=Gray](a)(94,95){}
\node[fillcolor=White](a)(96,95){10}
\node[fillcolor=White](a)(98,95){11}
\node[fillcolor=White](a)(100,95){12}
\node[fillcolor=White](a)(102,95){13}
\node[fillcolor=White](a)(104,95){\dots}

\drawvertex(A3){\makebox(0,0){$\{0,2,4,6,8,10,12,\dots\}$}}
\node[fillcolor=White](a)(116,45){0}
\node[fillcolor=Gray](a)(118,45){}
\node[fillcolor=White](a)(120,45){2}
\node[fillcolor=Gray](a)(122,45){}
\node[fillcolor=White](a)(124,45){4}
\node[fillcolor=Gray](a)(126,45){}
\node[fillcolor=White](a)(128,45){6}
\node[fillcolor=Gray](a)(130,45){}
\node[fillcolor=White](a)(132,45){8}
\node[fillcolor=Gray](a)(134,45){}
\node[fillcolor=White](a)(136,45){10}
\node[fillcolor=Gray](a)(138,45){}
\node[fillcolor=White](a)(140,45){12}
\node[fillcolor=White](a)(142,45){13}
\node[fillcolor=White](a)(144,45){\dots}

\drawundirectededge(A0cua,A101cap){}
\drawundirectededge(A0cua,A102cap){}
\drawundirectededge(A0cua,A103cap){}
\drawundirectededge(A0cua,A104cap){}
\drawundirectededge(A0cua,A105cap){}
\drawundirectededge(A0cua,A106cap){}
\drawundirectededge(A0cua,A107cap){}
\drawundirectededge(A0cua,A108cap){}
\drawundirectededge(A0cua,A109cap){}
\drawundirectededge(A0cua,A110cap){}
\drawundirectededge(A0cua,A111cap){}
\drawundirectededge(A0cua,A112cap){}
\drawundirectededge(A21cap,A102cua){}
\drawundirectededge(A22cap,A103cua){}
\drawundirectededge(A23cap,A103cua){}
\drawundirectededge(A24cap,A103cua){}
\drawundirectededge(A25cap,A103cua){}
\drawundirectededge(A26cap,A103cua){}
\drawundirectededge(A27cap,A103cua){}
\drawundirectededge(A28cap,A105cua){}
\drawundirectededge(A29cap,A105cua){}
\drawundirectededge(A24cua,A3cap){}
}
\end{picture}
}
\caption{${\mathscr T}_6$}
\label{fig:t6}
\end{center}
\end{figure}

The next two lemmas show some relations between ${\mathscr T}$ and ${\mathscr T}_g$. 

\begin{lemma}
If $\Lambda_1$ is a descendant of $\Lambda_2$ in ${\mathscr T}$ then $\Lambda_1'$ is a descendant of $\Lambda_2'$ in~${\mathscr T}$.
\end{lemma}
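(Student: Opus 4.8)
The plan is to prove the stronger statement that the ordinarization transform sends children to children in ${\mathscr T}$, and then to obtain the general descendant case by transitivity. Since $\Lambda_1$ is a descendant of $\Lambda_2$ exactly when there is a chain $\Lambda_2=S_0,S_1,\dots,S_k=\Lambda_1$ with each $S_{i+1}$ a child of $S_i$, it suffices to treat the case in which $\Lambda_1$ is a child of $\Lambda_2$; the general case then follows by applying the child case along the chain and using that the descendant relation is transitive.

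So assume $\Lambda_1$ is a child of $\Lambda_2$ in ${\mathscr T}$. By the definition of ${\mathscr T}$ this means $\Lambda_1=\Lambda_2\setminus\{x\}$, where $x$ is a minimal generator of $\Lambda_2$ with $x>F_2$, $F_2$ being the Frobenius number of $\Lambda_2$; consequently $x$ is the largest gap of $\Lambda_1$, so $F_1=x$. Write $m_2$ for the multiplicity of $\Lambda_2$. The key computation I would carry out first is the uniform identity
\[
\Lambda_1'=\Lambda_2\setminus\{m_2\}.
\]
To establish it I split on whether $\Lambda_1$ is ordinary. If $\Lambda_1$ is non-ordinary then necessarily $x>m_2$, so deleting $x$ does not change the multiplicity and $m_1=m_2$; applying the definition of the ordinarization transform gives $\Lambda_1'=(\Lambda_1\setminus\{m_2\})\cup\{x\}=(\Lambda_2\setminus\{x,m_2\})\cup\{x\}=\Lambda_2\setminus\{m_2\}$. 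If instead $\Lambda_1$ is ordinary, one checks that this forces $\Lambda_2$ to be ordinary with $x=m_2$, whence $\Lambda_1=\Lambda_2\setminus\{m_2\}$ is already ordinary and equals its own ordinarization transform, so again $\Lambda_1'=\Lambda_2\setminus\{m_2\}$. Note that $\Lambda_2\setminus\{m_2\}$ is indeed a numerical semigroup because the multiplicity is always a minimal generator.

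The second step is to identify $\Lambda_1'\cup\{F(\Lambda_1')\}$, the parent of $\Lambda_1'$ in ${\mathscr T}$, with $\Lambda_2'$. Since the gaps of $\Lambda_1'=\Lambda_2\setminus\{m_2\}$ are exactly the gaps of $\Lambda_2$ together with $m_2$, the Frobenius number of $\Lambda_1'$ is $\max(F_2,m_2)$. If $\Lambda_2$ is non-ordinary then $F_2>m_2$, so $F(\Lambda_1')=F_2$ and $\Lambda_1'\cup\{F_2\}=(\Lambda_2\setminus\{m_2\})\cup\{F_2\}=\Lambda_2'$ by the very definition of the ordinarization transform. If $\Lambda_2$ is ordinary then $F_2=m_2-1<m_2$, so $F(\Lambda_1')=m_2$ and $\Lambda_1'\cup\{m_2\}=\Lambda_2=\Lambda_2'$, using that an ordinary semigroup is its own ordinarization transform. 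In either case $\Lambda_2'=\Lambda_1'\cup\{F(\Lambda_1')\}$, which says precisely that $\Lambda_2'$ is the parent of $\Lambda_1'$ in ${\mathscr T}$; hence $\Lambda_1'$ is a child, and in particular a descendant, of $\Lambda_2'$.

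I expect the main obstacle to be the bookkeeping around the ordinary/non-ordinary dichotomy, since the formula $\Lambda'=\Lambda\setminus\{m\}\cup\{F\}$ is only valid for non-ordinary semigroups and must be replaced by $\Lambda'=\Lambda$ in the ordinary case; the delicate point is checking that when $\Lambda_1$ happens to be ordinary the parent $\Lambda_2$ is forced to be ordinary as well (with $x=m_2$), so that the two computations match up and the single clean identity $\Lambda_1'=\Lambda_2\setminus\{m_2\}$ survives across all cases.
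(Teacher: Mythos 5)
Your proof is correct and takes essentially the same route as the paper's: both arguments hinge on the identity $\Lambda_1'=\Lambda_2\setminus\{m\}$ (where $m$ is the common multiplicity) followed by the observation that adjoining to this semigroup its Frobenius number yields $\Lambda_2'$, whether or not $\Lambda_2$ is ordinary. The differences are only presentational: you spell out the ordinary/non-ordinary case distinctions that the paper dismisses as ``easy to check,'' and your transitivity reduction is harmless but unneeded, since the paper uses ``descendant'' to mean ``child'' (as in its description of the children in ${\mathscr T}_g$).
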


\begin{proof}
This is obvious when $\Lambda_1$ (and so $\Lambda_2$) is an ordinary semigroup.
Suppose that $\Lambda_1$ is not ordinary. It is easy to check that in this case 
$\Lambda_1$ and $\Lambda_2$ have the same multiplicity $m$ and that
$\Lambda_1\setminus\{m\}$ 
is a descendant of 
$\Lambda_2\setminus\{m\}$ in ${\mathscr T}$.
Now the lemma is a consequence of the fact that if we adjoin to a semigroup 
(in our case $\Lambda_1\setminus\{m\}$)
its Frobenius number we obtain its parent in ${\mathscr T}$ 
(in our case $\Lambda_2\setminus\{m\}=\Lambda_1'$)
and if we repeat the same procedure 
(in our case obtaining $\Lambda_2'$)
we obtain the parent of the parent in ${\mathscr T}$ 
(in our case the parent of $\Lambda_1'$ in ${\mathscr T}$).
\end{proof}

\begin{lemma}
If two non-ordinary semigroups $\Lambda_1$ and $\Lambda_2$ with the same genus $g$ have the same parent in ${\mathscr T}$ then they also have the same parent in ${\mathscr T}_g$.
\end{lemma}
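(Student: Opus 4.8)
The plan is to identify the common parent in ${\mathscr T}$ explicitly and then to compute the two ordinarization transforms directly, showing that they coincide with a single set that does not depend on which child we started from.

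First I would fix notation. Let $\Gamma$ be the common parent of $\Lambda_1$ and $\Lambda_2$ in ${\mathscr T}$, and let $F_i$ denote the Frobenius number of $\Lambda_i$. By the definition of the parent in ${\mathscr T}$ we have $\Gamma=\Lambda_i\cup\{F_i\}$, equivalently $\Lambda_i=\Gamma\setminus\{F_i\}$ for $i=1,2$. Since $F_i$ is the largest gap of $\Lambda_i$ and the remaining gaps of $\Lambda_i$ are exactly the gaps of $\Gamma$, the removed element satisfies $F_i>F_\Gamma$, where $F_\Gamma$ is the Frobenius number of $\Gamma$.

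The key intermediate step is to show that $\Lambda_1$ and $\Lambda_2$ have a common multiplicity, namely the multiplicity $m$ of $\Gamma$. As $F_i>F_\Gamma$, deleting $F_i$ from $\Gamma$ leaves the smallest positive element unchanged as long as $F_i\neq m$. I would rule out $F_i=m$: if the deleted element were the multiplicity, then $m=F_i>F_\Gamma$ would force the gaps of $\Gamma$ to be exactly $\{1,\dots,m-1\}$, so $\Gamma$ would be ordinary and $\Lambda_i=\Gamma\setminus\{m\}=\{0,m+1,m+2,\dots\}$ would be ordinary too, contradicting the hypothesis that $\Lambda_i$ is non-ordinary. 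Hence the multiplicity of each $\Lambda_i$ equals $m$. This multiplicity argument, and in particular disposing of the borderline case where $\Gamma$ is ordinary, is the only delicate point; everything else is bookkeeping with set operations.

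With the common multiplicity $m$ in hand the conclusion follows at once, since
\[
\Lambda_i'=(\Lambda_i\setminus\{m\})\cup\{F_i\}=(\Gamma\setminus\{F_i,m\})\cup\{F_i\}=\Gamma\setminus\{m\}
\]
is independent of $i$. Thus $\Lambda_1'=\Lambda_2'=\Gamma\setminus\{m\}$, which is precisely the statement that $\Lambda_1$ and $\Lambda_2$ share the same parent in ${\mathscr T}_g$.
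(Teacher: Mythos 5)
Your proof is correct and follows exactly the paper's route: the paper's one-line argument asserts precisely your conclusion, namely that the ordinarization transform of each child equals the common ${\mathscr T}$-parent $\Gamma$ with its multiplicity removed. You simply supply the details the paper leaves implicit (the shared multiplicity and the exclusion of the case $F_i=m$), so the two arguments coincide in substance.
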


\begin{proof}
Indeed, the parent of $\Lambda_1$ and $\Lambda_2$ in 
${\mathscr T}_g$ is the parent they have in 
${\mathscr T}$ without its multiplicity.
\end{proof}

\subsection{Conjecture}

 Let $n_{g,r}$ be the number of numerical semigroups of genus 
 $g$ and ordinarization number $r$.
 For each genus $g\leq 49$
 we computed $n_{g,r}$ for each ordinarization number $r$ from $0$ up to 
 $\lfloor\frac{g}{2}\rfloor$. The results are given in Table~\ref{t:experimentalresults}. One can observe that for each ordinarization number the number of semigroups of this ordinarization number increases with the genus or stays the same.
 By extending the definition of $n_{g,r}$ for $r>\lfloor\frac{g}{2}\rfloor$
 by setting $n_{g,r}=0$ in this case, this leads to the next conjecture.

 \begin{conjecture}
 For each genus $g\in{\mathbb N}_0$
 and each ordinarization number $r\in{\mathbb N}_0$, $n_{g,r}\leq n_{g+1,r}$.
 \end{conjecture}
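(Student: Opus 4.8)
The plan is to prove the conjecture by exhibiting, for every genus $g$ and every ordinarization number $r$, an injective map $\Phi_{g,r}$ from the set $\mathcal{S}_{g,r}$ of numerical semigroups of genus $g$ and ordinarization number $r$ into the set $\mathcal{S}_{g+1,r}$. Since $n_{g,r}=|\mathcal{S}_{g,r}|$, any such injection yields $n_{g,r}\le n_{g+1,r}$ at once, and summing over $r$ recovers the weaker statement $n_{g}\le n_{g+1}$. By Lemma~\ref{lemma:equivdef} a member of $\mathcal{S}_{g,r}$ is characterized by having exactly $r$ non-zero non-gaps $\lambda_1<\dots<\lambda_r\le g$ below the genus, so the image of $\Phi_{g,r}$ must be a genus $g+1$ semigroup again having exactly $r$ non-zero non-gaps below $g+1$; this is the invariant that the construction has to respect.

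The construction I would try first raises the genus by one while freezing the small non-gaps. Writing $F$ for the Frobenius number and $c=F+1$ for the conductor, the naive candidate $\Lambda\mapsto\Lambda\setminus\{c\}$ deletes the conductor; when $c>g$ this keeps all of $0,\lambda_1,\dots,\lambda_r$ intact and produces a genus $g+1$ semigroup precisely when $c$ is a minimal generator, i.e. when $c$ is not a sum $\lambda_i+\lambda_j$ of two smaller non-gaps. When $c$ is forced as such a sum (as already happens for the semigroup \eqref{hyperelliptic}), one instead removes a carefully chosen minimal generator $s\ge c$, whose deletion automatically preserves closure; the admissible choices are governed by the sumset $\{\lambda_i+\lambda_j\}$, whose size is controlled by Lemma~\ref{lemma:arithseqsums}, and $s$ must be selected so that the number of non-zero non-gaps below the new genus stays equal to $r$. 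The same lemma shows that at the two extremes the choice is rigid: for $r=0$ both sets are the singleton consisting of the ordinary semigroup, and for $r=\lfloor g/2\rfloor$ the sumset is an arithmetic progression of exactly $2r+1$ terms, which pins down the semigroup in (\ref{hyperelliptic}) and makes $\Phi$ essentially unique. These are exactly the proven extremal cases, and the task is to interpolate a uniform rule between them.

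A more structural route is to build $\Phi$ by induction on the depth $r$ so that it commutes with the ordinarization transform, that is, $\Phi_{g,r}(\Lambda)'=\Phi_{g,r-1}(\Lambda')$. Such a $\Phi$ is a morphism of the trees ${\mathscr T}_g\to{\mathscr T}_{g+1}$ sending depth-$r$ nodes to depth-$r$ nodes, so that global injectivity reduces to injectivity on the children of a fixed node: it suffices to map the children of $\Lambda$ in ${\mathscr T}_g$ injectively to children of $\Phi(\Lambda)$ in ${\mathscr T}_{g+1}$. Here the two lemmas relating ${\mathscr T}$ and ${\mathscr T}_g$ are the natural tools, since they let one transport the child structure of ${\mathscr T}_g$ (taking away a generator above $F$ and inserting a new non-gap below the multiplicity in a licit place) through the infinite tree ${\mathscr T}$, where children are simply removals of minimal generators above the Frobenius number.

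The step I expect to be the main obstacle is guaranteeing, by a single uniform rule, that the image is genuinely a semigroup (closed under addition) while simultaneously preserving the ordinarization number and remaining injective. Closure fails exactly at the sums $\lambda_i+\lambda_j$ of the small non-gaps, so any rule must dodge this sumset, and by the Freiman-type dichotomy of Lemma~\ref{lemma:arithseqsums} the sumset behaves very differently according to whether the small non-gaps form a piece of an arithmetic progression. This forces a case analysis on the multiplicity and on the arithmetic structure of $\lambda_1,\dots,\lambda_r$, and it is precisely the apparent absence of one rule that works across all of these regimes — rather than only at the rigid extremes — that keeps the statement at the level of a conjecture.
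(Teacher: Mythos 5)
You have not proved the statement, and you should be aware that the paper does not prove it either: it is stated there as a conjecture, supported by the data of Table~\ref{t:experimentalresults}, and only special cases are established. Judged as a proof, your proposal has one central, fatal gap: the injection $\Phi_{g,r}\colon\mathcal{S}_{g,r}\to\mathcal{S}_{g+1,r}$ is never constructed, and the candidate rules you float all fail concretely. The naive rule $\Lambda\mapsto\Lambda\setminus\{c\}$, $c=F+1$, breaks in two distinct ways. First, $c$ need not be a minimal generator, as you note for the semigroup \eqref{hyperelliptic}. Second, and this you do not address, even when $c$ \emph{is} a minimal generator the ordinarization number is measured against the \emph{new} genus $g+1$ (Lemma~\ref{lemma:equivdef}), so if $g+1\in\Lambda$ and $c>g+1$ the image has $r+1$ small non-gaps, not $r$: e.g.\ $\Lambda=\{0,4,5,7,8,9,\dots\}$ has $g=4$, $r=1$, $c=7$ a minimal generator, but $\Lambda\setminus\{7\}=\{0,4,5,8,9,\dots\}$ has genus $5$ and ordinarization number $2$. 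Worse, there are semigroups for which \emph{no} deletion of a single generator above the conductor works: for $g$ odd and $\Lambda$ the semigroup \eqref{hyperelliptic}, the only minimal generator $\geq c$ is $2g+1$, and removing it yields $\{0,2,\dots,2g,2g+2,\dots\}$, of genus $g+1$ and ordinarization number $\lfloor\frac{g+1}{2}\rfloor=r+1$. So a correct $\Phi$ cannot be ``delete one well-chosen generator'' uniformly; your fallback (``choose $s\geq c$ so that the count stays $r$'') presupposes exactly the existence statement that is in question, and your tree-morphism variant merely relocates the same difficulty to the unproven claim that children of $\Lambda$ in ${\mathscr T}_g$ can be matched injectively with children of $\Phi(\Lambda)$ in ${\mathscr T}_{g+1}$. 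Your own closing paragraph concedes this, so what you have is a research plan, not a proof.

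For the record, the paper's partial results go by counting, not by an injection: $n_{g,0}=1$ trivially; $n_{g,1}$ is computed in closed form (Lemma~\ref{lemma:casu}) and checked to be nondecreasing; and for $r\geq\max\left(\frac{g}{3}+1,\left\lfloor\frac{g+1}{2}\right\rfloor-14\right)$ Fre\u\i man's theorem (Theorem~\ref{t:freiman}) forces all small non-gaps to be even (Lemma~\ref{lemma:parells}), which yields the structure theorem (Theorem~\ref{theorem:high}): in that range $n_{g,r}=f_\omega$ depends only on $\omega=\lfloor\frac{g}{2}\rfloor-r$, and monotonicity in $g$ reduces to $f_\omega\leq f_{\omega+1}$, verified computationally only up to $\omega=14$. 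Your instinct that the sumset structure of Lemma~\ref{lemma:arithseqsums} and Freiman-type rigidity must govern any uniform rule is consistent with the paper's method at the extremes, but bridging the middle range of $r$ is precisely what remains open.
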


 This is equivalent to the number of numerical semigroups at a given depth 
 of ${\mathscr T}_g$
 being at most the 
 number of numerical semigroups at the same depth of ${\mathscr T}_{g+1}$.
 If the conjecture were true then the total number of nodes in
 ${\mathscr T}_g$ would be at most the total number of nodes in
 ${\mathscr T}_{g+1}$ proving that $n_g$ increases with~$g$.

 \section{Partial proofs of the conjecture}
 We will prove the conjecture for particular values of the pair $g,r$.
 We wrote these values in bold face in Table~\ref{t:experimentalresults}.
 It is obvious that for $r=0$ we always have $n_{g,r}=1$ since 
 for any genus the ordinary semigroup is the unique numerical semigroup of 
 ordinarization number $0$.
 In the next subsections we will prove the conjecture for 
 $n_{g,1}$ and any $g$ and for $n_{g,r}$ and any $g$, whenever $r\geq\max(\frac{g}{3}+1,\lfloor\frac{g+1}{2}\rfloor-\maxomega)$. 

 \subsection{Ordinarization number 1}
 \begin{lemma}
 \label{lemma:casu}
 Let $g\in{\mathbb N}_0$.
 The number of semigroups of genus $g$ and ordinarization number $1$ is 
 $$n_{g,1}=
 \left\lceil\frac{g-1}{2}\right\rceil\left\lfloor\frac{g+1}{2}\right\rfloor
 +
 \frac{\lfloor\frac{g-1}{2}\rfloor\lfloor\frac{g+1}{2}\rfloor}{2}
 =
 \left\{
 \begin{array}{ll}
 \frac{3}{8}g^2-\frac{1}{4}g & \mbox{ if }g\mbox{ is even,}\\
 \frac{3}{8}g^2-\frac{3}{8} & \mbox{ if }g\mbox{ is odd.}
 \end{array}
 \right.
 $$
 \end{lemma}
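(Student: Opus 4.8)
The plan is to use Lemma~\ref{lemma:equivdef} to determine the exact shape of a genus-$g$ semigroup of ordinarization number $1$, and then reduce the problem to counting admissible pairs (multiplicity, Frobenius number).

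First I would note that, by Lemma~\ref{lemma:equivdef}, ordinarization number $1$ means that $\Lambda$ has exactly one non-zero non-gap that is $\leq g$, and this must be its multiplicity $m$; thus $m\leq g$ and every other non-zero non-gap exceeds $g$. Since $\Lambda$ has $g$ gaps and $g-1$ of them already occupy $\{1,\dots,g\}\setminus\{m\}$, exactly one gap lies above $g$, namely the Frobenius number $F$. Hence $\Lambda=\{0,m\}\cup\big(\{g+1,g+2,\dots\}\setminus\{F\}\big)$ with $g+1\leq F$, and the task is to decide for which $(m,F)$ this set is a semigroup.

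Next I would extract the closure conditions. Requiring $2m=m+m$ to be a non-gap forces $2m>g$ (otherwise $2m$ would be a second non-zero non-gap below $g$) together with $2m\neq F$; requiring $m+x$ to be a non-gap for every non-gap $x>g$ forces $F-m\leq g$, i.e.\ $F\leq g+m$. These conditions are also sufficient: once $2m>g$, every sum of two positive non-gaps exceeds $g$, the sums $x+y$ with $x,y>g$ already exceed $F$, and $F\leq g+m$ guarantees that no $m+x$ equals $F$. So the semigroups of genus $g$ and ordinarization number $1$ correspond bijectively to the pairs $(m,F)$ with $\lfloor g/2\rfloor+1\leq m\leq g$, $g+1\leq F\leq g+m$ and $F\neq 2m$.

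Finally I would carry out the enumeration. For each admissible $m$ the excluded value $2m$ always falls inside $[g+1,g+m]$, so there are exactly $m-1$ choices of $F$; to obtain the two displayed summands I would split this interval at $2m$ and count the Frobenius numbers with $F<2m$ and with $F>2m$ separately. Summing over $\lfloor g/2\rfloor+1\leq m\leq g$, the first arithmetic sum evaluates to $\lceil\frac{g-1}{2}\rceil\lfloor\frac{g+1}{2}\rfloor$ and the second to $\frac12\lfloor\frac{g-1}{2}\rfloor\lfloor\frac{g+1}{2}\rfloor$, after which a parity case-split on $g$ yields the closed forms $\frac38g^2-\frac14g$ and $\frac38g^2-\frac38$. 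The only real difficulty is bookkeeping: pinning down the boundary condition $F\leq g+m$ and the single exclusion $F\neq 2m$ precisely, and then keeping the floors and ceilings straight through the two summations.
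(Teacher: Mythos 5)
Your proposal is correct and takes essentially the same route as the paper: your pair (multiplicity $m$, Frobenius number $F$) is exactly the paper's pair (added non-gap $b$, removed non-gap $a$), and your closure conditions $m\le g$, $2m>g$, $F\le g+m$, $F\ne 2m$ are precisely the paper's four necessary-and-sufficient conditions. The only difference is bookkeeping: the paper sums over $a=F$ with $b=m$ ranging in an interval and subtracts the even values of $a$ at the end, while you sum over $m$ with exactly $m-1$ admissible values of $F$, which is marginally tidier but not a different argument.
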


\begin{proof}
The semigroups of ordinarization number one are obtained 
from the ordinary semigroup $\{0,g+1,g+2,\dots\}$
by taking out one non-gap $a$ and adding a non-zero non-gap $b$ 
smaller than $g+1$.
Since any element in the ordinary semigroup larger than $2g+1$ 
is a sum of two non-zero non-gaps it can not be taken out.
So,  
$$
g+1\leq a\leq 2g+1.
$$

Fix $a$ in the previous range. For $b$ we have four 
necessary conditions which together become sufficient:
\begin{enumerate}
\item $b\leq g$ since $b$ must be a gap of the ordinary semigroup;
\item $a-b\leq g$ since otherwise $a=b+(a-b)$ and so $a$ 
must be in the new semigroup;
\item $2b\geq g+1$ because otherwise $b,2b$ are two different non-gaps which are at most $g$ contradicting that the ordinarization number of the new semigroup is $1$;
\item $2b\neq a$ because otherwise $a$ must be a non-gap.
\end{enumerate}

From the first three conditions we deduce 
$$\max\left(a-g,\left\lceil\frac{g+1}{2}\right\rceil\right)\leq b\leq g.$$
Now, taking also the fourth condition into consideration
we get that the number of options for the pair $a,b$ (and so $n_{g,1}$) is
\begin{eqnarray*}
n_{g,1}&=&\sum_{a=g+1}^{2g+1}(g+1-\max(a-g,\left\lceil\frac{g+1}{2}\right\rceil))-\#\{\mbox{even integers in }\{g+1,\dots,2g+1\}\}\\
&=&\sum_{a=g+1}^{\left\lceil\frac{g+1}{2}\right\rceil+g}(g+1-\left\lceil\frac{g+1}{2}\right\rceil)
+\sum_{\lceil\frac{g+1}{2}\rceil+g+1}^{2g+1}(g+1-a+g)\ \ -\ \ \left\lfloor\frac{g+1}{2}\right\rfloor\\
&=&\sum_{a=g+1}^{\lceil\frac{g+1}{2}\rceil+g}\left\lfloor\frac{g+1}{2}\right\rfloor
+\sum_{\lceil\frac{g+1}{2}\rceil+g+1}^{2g+1}(2g+1-a)\ \ -\ \ \left\lfloor\frac{g+1}{2}\right\rfloor\\
&=&\left\lceil\frac{g+1}{2}\right\rceil\left\lfloor\frac{g+1}{2}\right\rfloor
+\sum_{i=0}^{\lfloor\frac{g-1}{2}\rfloor}i\ \ -\ \ \left\lfloor\frac{g+1}{2}\right\rfloor\\
&=&\left\lceil\frac{g-1}{2}\right\rceil\left\lfloor\frac{g+1}{2}\right\rfloor
+\frac{\lfloor\frac{g-1}{2}\rfloor\lfloor\frac{g+1}{2}\rfloor}{2}.\\
\end{eqnarray*}

\end{proof}

 \begin{corollary}
 For each genus $g\in{\mathbb N}_0$, $n_{g,1}\leq n_{g+1,1}$.
 \end{corollary}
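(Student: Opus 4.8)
The plan is to invoke directly the closed-form expression for $n_{g,1}$ established in Lemma~\ref{lemma:casu} and to verify the inequality $n_{g,1}\le n_{g+1,1}$ by an elementary case analysis on the parity of $g$. Since the quadratic formula takes two different forms according to whether its argument is even or odd, and since passing from $g$ to $g+1$ flips the parity, each case will pair an even-argument branch of the formula with an odd-argument branch.

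First I would treat the case $g$ even, so that $g+1$ is odd. Here $n_{g,1}=\frac{3}{8}g^2-\frac{1}{4}g$ and $n_{g+1,1}=\frac{3}{8}(g+1)^2-\frac{3}{8}$. Subtracting, the $g^2$ terms cancel against the $2g+1$ produced by expanding the square, and after collecting terms one finds $n_{g+1,1}-n_{g,1}=g\ge 0$. Next I would treat $g$ odd, so that $g+1$ is even; now $n_{g,1}=\frac{3}{8}g^2-\frac{3}{8}$ and $n_{g+1,1}=\frac{3}{8}(g+1)^2-\frac{1}{4}(g+1)$, and the same cancellation yields $n_{g+1,1}-n_{g,1}=\frac{g+1}{2}\ge 0$. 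In both cases the difference is a non-negative integer (resp.\ half-integer) for every $g\in{\mathbb N}_0$, which is exactly the claim.

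There is no genuine obstacle here: once the exact formula of Lemma~\ref{lemma:casu} is in hand, the corollary reduces to routine polynomial arithmetic. The only point deserving a little care is the bookkeeping of parities, namely that the branch of the formula used to evaluate $n_{g+1,1}$ is always the opposite-parity branch to the one used for $n_{g,1}$; this is precisely what the two-case split records, so nothing subtle remains.
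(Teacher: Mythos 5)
Your proposal is correct and follows essentially the same route as the paper: both split on the parity of $g$, apply the closed form from Lemma~\ref{lemma:casu}, and find the differences $n_{g+1,1}-n_{g,1}=g$ (for $g$ even) and $\frac{g+1}{2}$ (for $g$ odd), which are non-negative. The only trivial slip is calling $\frac{g+1}{2}$ a half-integer: since $g$ is odd in that case, it is in fact an integer, as it must be.
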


\begin{proof}
If $g$ is even and $g+1$ is odd then
\begin{eqnarray*}
n_{g,1}&=&\frac{3}{8}g^2-\frac{1}{4}g\\
n_{g+1,1}&=&\frac{3}{8}(g+1)^2-\frac{3}{8}=\frac{3}{8}g^2+\frac{3}{4}g\\
\end{eqnarray*}
So, $n_{g+1,1}=n_{g,1}+g\geq n_{g,1}$.

On the other hand, if $g$ is odd and $g+1$ is even then
\begin{eqnarray*}
n_{g,1}&=&\frac{3}{8}g^2-\frac{3}{8}\\
n_{g+1,1}&=&\frac{3}{8}(g+1)^2-\frac{1}{4}(g+1)=\frac{3}{8}g^2+\frac{g}{2}+\frac{1}{8}\\
\end{eqnarray*}
So, $n_{g+1,1}=n_{g,1}+\frac{g+1}{2}\geq n_{g,1}$.
\end{proof}


 \subsection{High ordinarization numbers}
Next we will need Fre\u\i man's Theorem \cite{Freiman1,Freiman2} as formulated
in \cite{Nathanson}.
\begin{theorem}[Fre{\u\i}man]
\label{t:freiman}
Let $A$ be a set of integers such that $\#A=k\geq 3$.
If $\#(A+A)\leq 3k-4$, then $A$ is a subset of an arithmetic 
progression of length $\#(A+A)-k+1\leq 2k-3$.
\end{theorem}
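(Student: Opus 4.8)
The statement is Freiman's classical $3k-4$ theorem, so the plan is to recover it by the standard two-step route: first normalise the set, then prove a sharp lower bound on $\#(A+A)$ that forces $A$ to be dense. I would begin with the normalisation. Translating $A$ so that $\min A=0$ and then dividing through by $d=\gcd(A)$ are affine maps that preserve both $\#A$ and $\#(A+A)$, and an arithmetic progression containing the rescaled set pulls back to an arithmetic progression (of common difference $d$ times the new one) containing $A$. Hence I may assume $A=\{0=a_0<a_1<\dots<a_{k-1}=n\}$ with $\gcd(a_1,\dots,a_{k-1})=1$. In this normalised situation any arithmetic progression containing $A$ must have common difference $1$, since a common divisor $>1$ of all differences $a_i-a_0=a_i$ would contradict $\gcd=1$; so the shortest progression containing $A$ is the interval $\{0,1,\dots,n\}$ of length $n+1$. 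Therefore it suffices to prove the single inequality $n\leq \#(A+A)-k$. Padding $\{0,\dots,n\}$ on one side then exhibits $A$ inside a progression of length exactly $\#(A+A)-k+1$, which is $\leq 2k-3$ because $\#(A+A)\leq 3k-4$.

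The core is the counting estimate that, for a normalised $A$ as above, $\#(A+A)\geq \min(n+k,\,3k-3)$. Granting this, the hypothesis $\#(A+A)\leq 3k-4<3k-3$ rules out the second term of the minimum, leaving $n+k\leq \#(A+A)$, which is exactly the inequality needed. To prove the estimate I would start from the two monotone chains inside $A+A$: the chain $a_0+a_0<a_0+a_1<\dots<a_0+a_{k-1}$, which is $A$ itself and lies in $[0,n]$, and the chain $a_{k-1}+a_0<\dots<a_{k-1}+a_{k-1}$, which is $n+A$ and lies in $[n,2n]$. They overlap only in $n$ and already account for $2k-1$ elements. The work is to produce the further sums needed to reach $\min(n+k,3k-3)$, i.e.\ the additional $\min(n-k+1,\,k-2)$ elements.

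I would carry this out by induction on $k$. Deleting the largest element $a_{k-1}$ leaves a set $A'=\{a_0,\dots,a_{k-2}\}$ of size $k-1$ with $\min A'=0$, and the translated copy $a_{k-1}+A$ contributes the sums lying above $2a_{k-2}=\max(A'+A')$. Counting these new sums against the consecutive gaps $a_i-a_{i-1}$ is the mechanism: if many gaps of size $\geq 2$ occur then each forces a genuinely new element of $A+A$, pushing the total up towards $3k-3$; if instead the gaps are almost all equal to $1$ then $A$ is so tightly packed that $n$ is already small, giving the bound $n+k$. Matching these two regimes against $\#(A+A)\leq 3k-4$ is what pins $A$ into the interval.

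The main obstacle is precisely this counting lemma. The delicate point in the inductive step is to verify that each newly introduced extreme element, or each gap of size $\geq 2$, contributes the correct number of previously-unseen elements of $A+A$, and to manage the boundary cases in which the $\gcd$ of the truncated set $A'$ jumps above $1$ (so that the induction hypothesis must be applied in its general, not yet fully normalised, form). Everything else—the normalisation and the final extraction of the progression of length $\#(A+A)-k+1$—is formal once the sharp bound $\#(A+A)\geq\min(n+k,\,3k-3)$ is established.
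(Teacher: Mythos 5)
The first thing to note is that the paper contains no proof of this statement: it is Fre\u\i man's classical ``$3k-4$'' theorem, which the paper imports as a black box from \cite{Freiman1,Freiman2}, in the formulation of \cite{Nathanson}, and then applies in the proof of Lemma~\ref{lemma:parells}. So there is no ``paper's own proof'' to match; your proposal can only be measured against the standard treatment in the literature.

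Measured that way, your outer scaffolding is correct and is exactly the standard one. The normalization (translate so $\min A=0$, divide by $d=\gcd(A)$) preserves $\#A$ and $\#(A+A)$ and transports arithmetic progressions back and forth; after it, any progression containing $A$ must have common difference $1$, so the theorem reduces to the single inequality $n\leq\#(A+A)-k$, where $n=\max A$; and this follows from the key estimate $\#(A+A)\geq\min(n+k,\,3k-3)$ combined with the hypothesis $\#(A+A)\leq 3k-4$. This is precisely how the result is organized in Nathanson's book. The genuine gap is that the key estimate --- which carries the entire content of Fre\u\i man's theorem --- is never proved. Your sketch of it (induction on $k$ by deleting $a_{k-1}$, ``each gap of size $\geq 2$ forces a genuinely new element of $A+A$'') names the right difficulties but does not resolve them: you never specify which sums are the new ones, nor why they are distinct from the $2k-1$ elements of the two monotone chains and from one another, and the degenerate branch where the truncated set $A'=A\setminus\{a_{k-1}\}$ has $\gcd(A')>1$ is only flagged, not handled. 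That branch is not a boundary nuisance but a structurally different case: for instance, for $A=\{0,2,4,\dots,2k-4\}\cup\{a_{k-1}\}$ with $a_{k-1}$ odd, the new sums $a_{k-1}+A'$ are new because of parity, not because they exceed $\max(2A')$, so the counting mechanism you describe simply does not apply to them and a separate residue-class argument is required. As it stands, the proposal is a correct reduction plus an honest list of obstacles; the theorem itself remains unproven.
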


 By means of Fre\u\i man's Theorem we can prove
 the next lemma which tells that the semigroups of large ordinarization number 
 have the first non-gaps even.

 \begin{lemma}
 \label{lemma:parells}
 If a semigroup $\Lambda$ of genus $g$ has ordinarization number $r$ with
 $\frac{g+2}{3}\leq r\leq \lfloor\frac{g}{2}\rfloor$
 then all its non-gaps which are less than or equal to $g$ are even. 
 \end{lemma}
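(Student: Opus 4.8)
The plan is to apply Fre\u\i man's Theorem (Theorem~\ref{t:freiman}) to the set of small non-gaps. Write $A=\{\lambda_0=0,\lambda_1,\dots,\lambda_r\}$ for the non-gaps of $\Lambda$ that are at most $g$; by Lemma~\ref{lemma:equivdef} these are exactly $r+1$ elements, so $k:=\#A=r+1$. In the admissible range $\frac{g+2}{3}\le r\le\lfloor\frac g2\rfloor$ one has $r\ge 2$, hence $k\ge 3$, which is the size hypothesis of Fre\u\i man's Theorem; it remains to control $\#(A+A)$.

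First I would bound the sumset. Since $\Lambda$ is closed under addition, $A+A$ is a set of non-gaps contained in $[0,2g]$; as the Frobenius number is at most $2g-1$, there are exactly $g+1$ non-gaps in $[0,2g]$, so $\#(A+A)\le g+1$. The hypothesis $r\ge\frac{g+2}{3}$ is tailored precisely so that $g+1\le 3r-1=3k-4$, the inequality required in Theorem~\ref{t:freiman}. Thus $A$ lies in an arithmetic progression $P$ of common difference $\alpha$ and length $L=\#(A+A)-k+1\le g-r+1$. Because $0\in A\subseteq P$, every term of $P$, and in particular every element of $A$, is a multiple of $\alpha$; so the whole statement reduces to proving $\alpha=2$.

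Next I would pin $\alpha$ between $1$ and $2$. As $A$ consists of $r+1$ distinct multiples of $\alpha$ including $0$, its largest element satisfies $\lambda_r\ge r\alpha$; combined with $\lambda_r\le g$ and $r\ge\frac{g+2}{3}$ this forces $\alpha\le g/r<3$, so $\alpha\in\{1,2\}$.

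The main obstacle is excluding $\alpha=1$, i.e.\ ruling out that the small non-gaps form a block of (nearly) consecutive integers. Here I would argue as follows: if $\alpha=1$ then $A$ sits inside $L$ consecutive integers, so $\lambda_r\le L-1\le g-r$. On the other hand $\lambda_r$ is the largest non-gap not exceeding $g$, whence $\lambda_r+\lambda_1>g$ (otherwise $\lambda_r+\lambda_1$ would be an even larger such non-gap), giving $\lambda_1\ge g-\lambda_r+1\ge r+1$. But then $\lambda_1<\dots<\lambda_r$ would be $r$ distinct integers confined to the interval $[r+1,\,g-r]$, which contains only $g-2r$ integers; this needs $g\ge 3r$, contradicting $r\ge\frac{g+2}{3}$. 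Hence $\alpha=1$ is impossible, so $\alpha=2$ and every non-gap up to $g$ is even.
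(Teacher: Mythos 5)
Your proof is correct, and while its skeleton coincides with the paper's (same set $A=\Lambda\cap[0,g]$, same bound $\#(A+A)\le g+1\le 3(r+1)-4$, same application of Fre\u\i man's Theorem, and the same argument ruling out common difference $\ge 3$), the crucial case $\alpha=1$ is handled by a genuinely different and shorter argument. The paper excludes $\alpha=1$ with a doubling induction: for $x\in A$, either $2x\ge g+1$ or $2x\le g-r$, and in the latter case an induction on $m$ shows $mx\le g-r$ for all $m$, forcing $x=0$; hence $A\subseteq\{0\}\cup\left[\left\lceil\frac{g+1}{2}\right\rceil,g-r\right]$, and a count of that interval gives $r\le\frac{g-1}{6}<r$. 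You instead exploit the maximality of $\lambda_r$ among non-gaps $\le g$: closure gives $\lambda_1+\lambda_r>g$, hence $\lambda_1\ge g-\lambda_r+1\ge r+1$ once Fre\u\i man's bound forces $\lambda_r\le g-r$, and then the $r$ elements $\lambda_1<\dots<\lambda_r$ must fit in $[r+1,g-r]$, which contains only $g-2r<r$ integers. Your route avoids the induction on multiples entirely and replaces the paper's two-zone structure of $A$ by a single interval-counting contradiction; the paper's argument, on the other hand, yields the stronger intermediate structural fact that any element of $A$ below $\frac{g+1}{2}$ must be $0$, which is of some independent interest but is not needed for the lemma. Both arguments use the hypothesis $r\ge\frac{g+2}{3}$ only through the inequalities $g+1\le 3r-1$ and $3r>g$, so the two proofs have identical reach.
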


\begin{proof}
Suppose that $\Lambda$ is a semigroup of genus $g$ and ordinarization number 
$r\geq \frac{g+2}{3}$.
This means in particular that 
$\lambda_0=0,\lambda_1,\dots,\lambda_r\leq g$ and
$\lambda_{r+1}\geq g+1$.

Let $A=\Lambda\cap[0,g]=\{\lambda_0,\lambda_1,\dots,\lambda_r\}$. 
We have that the elements in $A+A$ are upper bounded by $2g$ and so 
$A+A\subseteq \Lambda\cap[0,2g]$.
Then $\#(A+A)\leq \#(\Lambda\cap[0,2g])$.
Since the Frobenius number of $\Lambda$ is at most $2g-1$,
$\#(\Lambda\cap[0,2g])=2g+1-g=g+1$.
So, $\#A=r+1$ while $\#(A+A)\leq g+1$.
Now, since $r\geq\frac{g+2}{3}$ we have 
$g+1\leq 3r-1=3(r+1)-4$ and we can apply Theorem~\ref{t:freiman} 
with $k=r+1$.
Thus we have that $A$ is a subset of an arithmetic progression of length at most $g+1-k+1=g-r+1$. Let $d(A)$ be the common difference
of this arithmetic progression.

Now, $d(A)$ can not be larger than or equal to three since otherwise
$\lambda_{r}\geq r\cdot d(A)\geq 3r\geq 3\frac{g+2}{3}>g$, a contradiction with $r$ being the ordinarization number.

If $d(A)=1$ then $A\subseteq[0,g-r]$ and we claim that 
in this case $A\subseteq\{0\}\cup[\lceil\frac{g+1}{2}\rceil,g-r]$.
Indeed, suppose that $x\in A$. Then $2x$ satisfies either $2x\leq g-r$ or $2x\geq g+1$.
If the second inequality is satisfied then it is obvious that 
$x\in\{0\}\cup[\lceil\frac{g+1}{2}\rceil,g-r]$.
If the first inequality is satisfied then we will prove that $mx\leq g-r$ for all $m\geq 2$ 
by induction on $m$ and this leads to $x=0$.
Indeed, if $mx\leq g-r$ then $x\leq \frac{g-r}{m}
\leq \frac{g-\frac{g+2}{3}}{m}
=\frac{2g-2}{3m}<\frac{2g}{3m}$.
Now $(m+1)x< \frac{2g(m+1)}{3m}=\frac{(2m+2)g}{3m}$ 
and since $m\geq 2$, we have $(m+1)x< \frac{(2m+m)g}{3m}=g$.
Since $(m+1)x$ is in $\Lambda\cap[0,g]=A\subseteq[0,g-r]$ this means that
$(m+1)x\leq g-r$ and this proves the claim.

Now, $A\subseteq\{0\}\cup[\lceil\frac{g+1}{2}\rceil,g-r]$
implies that 
$r\leq g-r-\lceil\frac{g+1}{2}\rceil+1=\lfloor\frac{g+1}{2}\rfloor-r\leq 
\frac{g+1}{2}-\frac{g+2}{3}=\frac{g-1}{6}<r$, a contradiction.

So, we deduce that $d(A)=2$, leading to the proof of the lemma.

\end{proof}

\begin{lemma}
\label{lemma:frob}
Suppose that a numerical 
semigroup $\Lambda$ has $\omega$ gaps between $1$ and $n-1$
and $n\geq 2\omega+2$ then 
\begin{enumerate}
\item $n\in\Lambda$,
\item the Frobenius number of $\Lambda$ is smaller than $n$,
\item the genus of $\Lambda$ is $\omega$.
\end{enumerate}
\end{lemma}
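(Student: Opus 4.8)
The plan is to prove all three items at once by showing, via a pairing (sumset) argument combined with induction, that every integer $m\geq n$ belongs to $\Lambda$. The basic observation is the contrapositive of closure under addition: if $s\notin\Lambda$, then for every $i$ with $1\leq i\leq s-1$ at least one of $i$ and $s-i$ must be a gap, since otherwise $s=i+(s-i)$ would be a sum of two non-gaps and hence lie in $\Lambda$.

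First I would quantify how many gaps this forces. The integers $1,\dots,s-1$ split into the complementary pairs $\{i,s-i\}$, together with the self-paired middle term $s/2$ when $s$ is even. If $s\notin\Lambda$, each pair $\{i,s-i\}$ with $i\neq s-i$ contributes at least one gap, and when $s$ is even the term $s/2$ is itself a gap (otherwise $s=2\cdot(s/2)\in\Lambda$). Counting the pairs in either parity, this shows that $s\notin\Lambda$ forces at least $\lceil(s-1)/2\rceil$ gaps in the interval $[1,s-1]$.

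Then I would run the induction on $m\geq n$. For the base case $m=n$ (which is item~1): by hypothesis $[1,n-1]$ contains exactly $\omega$ gaps, whereas $n\geq 2\omega+2$ gives $\lceil(n-1)/2\rceil\geq\omega+1>\omega$, so $n$ cannot be a gap. For the inductive step, suppose $n,n+1,\dots,m-1\in\Lambda$ for some $m>n$. The crucial point is that this hypothesis keeps the gap count pinned: since every integer in $[n,m-1]$ is a non-gap, the gaps of $[1,m-1]$ are exactly the $\omega$ gaps of $[1,n-1]$. Applying the count of the previous paragraph with $s=m$, and using $m\geq n\geq 2\omega+2$ so that $\lceil(m-1)/2\rceil\geq\omega+1$, we again get a contradiction unless $m\in\Lambda$. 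Hence $m\in\Lambda$, completing the induction.

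Finally, items~2 and~3 drop out. The induction shows every $m\geq n$ lies in $\Lambda$, so no gap is at least $n$ and the Frobenius number is smaller than $n$ (item~2). Consequently the genus, which is the total number of gaps, equals the number of gaps in $[1,n-1]$ plus the number of gaps at least $n$, namely $\omega+0=\omega$ (item~3). I expect the main point requiring care to be the bookkeeping in the inductive step: one must observe that filling in $[n,m-1]$ leaves the number of gaps below $m$ equal to $\omega$ rather than growing, and one must handle the even-$s$ middle term $s/2$ correctly so that the forced gap count strictly exceeds $\omega$.
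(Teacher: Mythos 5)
Your proof is correct and is essentially the paper's argument: the paper counts the $\lfloor\frac{n}{2}\rfloor\geq\omega+1$ pairs $\{s,t\}$ with $s+t=n$ and concludes by pigeonhole that some pair is gap-free, which is exactly your contrapositive count of $\lceil\frac{s-1}{2}\rceil$ forced gaps, and it then extends to all $m\geq n$ by the same induction you carry out. Your write-up merely makes explicit two points the paper leaves implicit, namely the even-$s$ middle term and the fact that filling in $[n,m-1]$ keeps the gap count at $\omega$.
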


\begin{proof}
\begin{enumerate}
\item
The number of pairs $s,t$ with $1\leq s\leq t\leq n-1$ such that $s+t=n$ is $\lfloor\frac{n}{2}\rfloor\geq \lfloor\frac{2\omega+2}{2}\rfloor=\omega+1$.
Since there are $\omega$ gaps between $1$ and $n-1$ this means that in at least one of these pairs both $s$ and $t$ are non-gaps and so $n$ is a sum of two non-gaps and so a non-gap.
\item
Using the same argument one can show by induction that $m\in\Lambda$ for all 
$m\geq n$. Thus, the Frobenius number must be smaller than $n$.
\item
It is a consequence of the previous statements.
\end{enumerate}
\end{proof}

A consequence of this Lemma~\ref{lemma:frob} is the
well known fact that the Frobenius number of a numerical semigroup
of genus $g$ is at most $2g-1$.
Indeed, otherwise take $\omega=g-1$ and $n$ the Frobenius number 
of the semigroup and get a contradiction.

 Let $\Lambda$ be a numerical semigroup.
 We say that a set $B\subset{\mathbb N}_0$ is $\Lambda$-{\it closed}
 if for any $b\in B$ and any $\lambda$ in $\Lambda$, the sum
 $b+\lambda$ is either in $B$ or it is larger than $\max(B)$.
 If $B$ is $\Lambda$-closed so is $B-\min(B)$. Indeed,
 $b-\min(B)+\lambda=(b+\lambda)-\min(B)$ is either in $B-\min(B)$
 or it is larger than $\max(B)-\min(B)=\max(B-\min(B))$.
 The new $\Lambda$-closed set contains $0$.
 We will denote by 
 $C(\Lambda,i)$
 the $\Lambda$-closed sets
 of size $i$ that contain $0$. 

 \begin{theorem}
 \label{theorem:high}
 Let $g\in{\mathbb N}_0$ and let $r$ be an integer with $\frac{g+2}{3}\leq r\leq \lfloor\frac{g}{2}\rfloor$. Define 
 $\omega=\lfloor\frac{g}{2}\rfloor-r$ 
 \begin{enumerate}
 \item
 If $\Omega$ is a numerical semigroup
 of genus $\omega$ and
 $B$ is a $\Omega$-closed set of size $\omega+1$ and first element equal to $0$ then  
 $$\{2j:j \in\Omega\}\cup \{2j-2\max(B)+2g+1:j\in B\}\cup(2g+{\mathbb N}_0)$$ 
 is a numerical semigroup of genus $g$ and ordinarization number $r$.
 \item
 All numerical semigroups of genus $g$ and ordinarization number $r$ 
 can be uniquely written as
 $$\{2j:j \in\Omega\}\cup \{2j-2\max(B)+2g+1:j\in B\}\cup(2g+{\mathbb N}_0)$$ 
 for a unique numerical semigroup
 $\Omega$ of genus $\omega$ 
 and a unique $\Omega$-closed set $B$ of size $\omega+1$ and first element equal to $0$. 
 \item
 The number of numerical semigroups of genus $g$ and ordinarization number $r$
 depends only on $\omega$.
 It is exactly $$\sum_{\mbox{Semigroups }\Omega\mbox{ of genus }\omega}\#C(\Omega,\omega+1).$$
 \end{enumerate}
 \end{theorem}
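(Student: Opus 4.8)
My plan is to read all three parts as a single bijection statement and to reduce part~3 to parts~1 and~2. Parts~1 and~2 together say that the assignment sending a pair $(\Omega,B)$ to the set $\{2j:j\in\Omega\}\cup\{2j-2\max(B)+2g+1:j\in B\}\cup(2g+{\mathbb N}_0)$ is a bijection from the pairs (numerical semigroup $\Omega$ of genus $\omega$, $\Omega$-closed set $B$ of size $\omega+1$ with first element $0$) onto the numerical semigroups of genus $g$ and ordinarization number $r$; part~3 is then immediate, since the number of such pairs is $\sum_{\Omega}\#C(\Omega,\omega+1)$, an expression depending only on $\omega$. The guiding idea throughout is that, by Lemma~\ref{lemma:parells}, every non-gap of such a $\Lambda$ that is at most $g$ is even, so one may split $\Lambda$ into an even part (which, halved, becomes $\Omega$) and an odd part (which, after an affine reindexing, becomes $B$); the interval $2g+{\mathbb N}_0$ records the common tail.

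For part~1 I would write the candidate set as $E\cup O\cup T$ with $E=\{2j:j\in\Omega\}$, $O=\{2j-2M+2g+1:j\in B\}$ where $M=\max(B)$, and $T=2g+{\mathbb N}_0$, and check closure under addition case by case. One has $E+E\subseteq E$ because $\Omega$ is a semigroup, while $O+O$, $(E\cup O)+T$ and $T+T$ all land in $T$ because every element of $O$ exceeds $g$ and every element of $T$ is at least $2g$. The decisive case is $E+O$, and it is exactly here that the $\Omega$-closedness of $B$ is used: for $j_1\in\Omega$ and $j_2\in B$ one has either $j_1+j_2\in B$, giving an element of $O$, or $j_1+j_2>M$, which pushes the sum beyond $2g$ into $T$. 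A direct count shows the even gaps are the $\omega$ doubled gaps of $\Omega$ and the odd gaps number $g-\omega$, so the genus is $g$. Finally, taking $b=0$ in the closedness condition forces $\Omega\cap[0,M]\subseteq B$, whence $M+1-\omega\leq\#B=\omega+1$ and $M\leq 2\omega\leq\lfloor\frac{g}{2}\rfloor$ (the last inequality is a short check from $r\geq\frac{g+2}{3}$); consequently the smallest element of $O$ is $2g+1-2M\geq g+1$, no odd non-gap is at most $g$, and the ordinarization number equals the number $r$ of non-zero even non-gaps at most $g$.

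For part~2, given a numerical semigroup $\Lambda$ of genus $g$ and ordinarization number $r$, I would set $\Omega=\{j\in{\mathbb N}_0:2j\in\Lambda\}$, which is a numerical semigroup because $\Lambda$ is, and recover $B$ from the odd non-gaps of $\Lambda$ that are at most $2g+1$ via the inverse of the reindexing $j\mapsto 2j-2M+2g+1$, choosing $M$ so that the smallest odd non-gap corresponds to $0$. That $B$ is $\Omega$-closed is simply the reverse of the $E+O$ computation above: if $2b-2M+2g+1$ is an odd non-gap of $\Lambda$ and $2\lambda\in\Lambda$, then their sum is again a non-gap of $\Lambda$, forcing either $b+\lambda\in B$ or $b+\lambda>M$. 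The reconstruction $\Phi(\Omega,B)=\Lambda$ then holds by the very definitions of $\Omega$ and $B$, so the only substantive point remaining is that $\Omega$ has genus exactly $\omega$.

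This genus computation is the crux and the main obstacle. By Lemma~\ref{lemma:parells} the non-zero non-gaps of $\Lambda$ at most $g$ are the $r$ even numbers $\lambda_1,\dots,\lambda_r$, so $\Omega\cap[0,\lfloor\frac{g}{2}\rfloor]$ has exactly $r+1$ elements and hence $\Omega$ has exactly $\omega=\lfloor\frac{g}{2}\rfloor-r$ gaps in $[1,\lfloor\frac{g}{2}\rfloor]$. Applying Lemma~\ref{lemma:frob} to $\Omega$ with $n=\lfloor\frac{g}{2}\rfloor+1$ shows that $\Omega$ contains everything from $n$ on and has genus $\omega$, provided $\lfloor\frac{g}{2}\rfloor+1\geq 2\omega+2$; this inequality is equivalent to $r\geq\frac{\lfloor g/2\rfloor+1}{2}$, which is precisely where the hypothesis $r\geq\frac{g+2}{3}$ enters, since one checks $\frac{g+2}{3}\geq\frac{\lfloor g/2\rfloor+1}{2}$ for all $g$. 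Once the genus of $\Omega$ is known to be $\omega$, the parallel bookkeeping of odd gaps forces $\#B=\omega+1$, the map of part~1 returns $\Lambda$, and part~3 follows at once. Everything except this step is routine; the difficulty is that controlling the gaps of $\Omega$ above $\lfloor\frac{g}{2}\rfloor$ is exactly what collapses without the lower bound on $r$, and it is here that Fre\u\i man's theorem (through Lemma~\ref{lemma:parells}) together with Lemma~\ref{lemma:frob} does the real work.
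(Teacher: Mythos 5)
Your proposal is correct and takes essentially the same route as the paper's own proof: the same even/odd/tail decomposition with the mixed case $E+O$ handled by $\Omega$-closedness, the same counting of even and odd gaps and of non-gaps at most $g$ to get genus $g$ and ordinarization number $r$, and the same use of Lemma~\ref{lemma:parells} plus Lemma~\ref{lemma:frob} with $n=\lfloor\frac{g}{2}\rfloor+1$ (the inequality $n\geq 2\omega+2$ being exactly where $r\geq\frac{g+2}{3}$ enters) to show $\Omega$ has genus $\omega$. No substantive gaps to report.
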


\begin{proof}
\begin{enumerate}
\item
Suppose that 
$\Omega$ is a numerical semigroup of genus $\omega$ 
and $B$ is a $\Omega$-closed set of size $\omega+1$ and first element equal to $0$. Let $X=\{2j:j \in\Omega\}$, $Y=\{2j-2\max(B)+2g+1:j\in B\}$, $Z=(2g+{\mathbb N}_0)$.

First of all let us see that the complement 
${\mathbb N}_0\setminus(X\cup Y\cup Z)$ 
has $g$ elements. Notice that all elements in $X$ are even while all elements in $Y$ are odd. So, $X$ and $Y$ do not intersect. 
Also the unique element in $Y\cap Z$ is $2g+1$. The number of elements in the complement will be 
\begin{eqnarray*}\#{\mathbb N}_0\setminus(X\cup Y\cup Z)&=&2g-\#\{x\in X:x<2g\}-\#Y+1\\&=&2g-\#\{s\in\Omega:s<g\}-\#B+1\\&=&2g-\omega-\#\{s\in\Omega:s<g\}.
\end{eqnarray*}

We know that all gaps of $\Omega$ are at most $2\omega-1<g$. 
So, $\#\{s\in\Omega:s<g\}=g-\omega$ and we conclude that
$\#{\mathbb N}_0\setminus(X\cup Y\cup Z)=g$.

Before proving that $X\cup Y\cup Z$ 
is a numerical semigroup,
let us prove that the number of non-zero elements in $X\cup Y\cup Z$ 
which are smaller than or equal to $g$ 
is $r$. 
Once we prove that $X\cup Y\cup Z$ is a numerical semigroup,
this will mean, by Lemma~\ref{lemma:equivdef}, 
that it has ordinarization number $r$.
On one hand, 
all elements in $Y$ are larger than $g$. 
Indeed, if $\lambda$ is the enumeration of $\Omega$ then $\max(B)\leq\lambda_\omega\leq 2\omega=
2\lfloor\frac{g}{2}\rfloor-2r\leq
g-2\frac{g+2}{3}<\frac{g}{3} $. 
Now, for any $j\in B$, $2j-2\max(B)+2g+1>2g-2\max(B)>g$. 
On the other hand, 
all gaps of $\Omega$ are at most 
$2\omega-1=2\lfloor\frac{g}{2}\rfloor-2r-1\leq g-\frac{2(g+2)}{3}-1<\frac{g}{3}-1$ and so
all the even integers not belonging to $X$ are less than $g$.
So, the number of non-zero non-gaps of $X\cup Y\cup Z$ smaller than or equal to$g$ is $\lfloor\frac{g}{2}\rfloor-\omega=r$.

To see that $X\cup Y\cup Z$ 
is a numerical semigroup we only need to see that
it is closed under addition. It is obvious that $X+Z\subseteq Z$, $Y+Z\subseteq Z$, $Z+Z\subseteq Z$. It is also obvious that $X+X\subseteq X$ since $\Omega$ is a numerical semigroup and that $Y+Y\subseteq Z$ since, as we proved before, all elements in $Y$ are larger than $g$.

It remains to see that 
$X+Y\subseteq X\cup Y\cup Z$.
Suppose that $x\in X$ and $y\in Y$. Then $x=2i$ for some $i\in\Omega$ and $y=2j-2\max(B)+2g+1$ 
for some $j\in B$. Then $x+y=2(i+j)-2\max(B)+2g+1$. Since $B$ is $\Omega$-closed, we have that either 
$i+j\in B$ and so $x+y\in Y$ or $i+j>\max(B)$. In this case $x+y\in Z$. So, $X+Y\subseteq Y\cup Z$.

\item
First of all notice that,
since the Frobenius number of a semigroup $\Lambda$ of genus $g$ is
smaller than $2g$, it holds
$$\Lambda\cap(2g+{\mathbb N}_0)=(2g+{\mathbb N}_0).$$
For any numerical semigroup
the set $\Omega=\{\frac{j}{2}:j\in\Lambda\cap(2{\mathbb N}_0)\}$ is a numerical semigroup.
If $\Lambda$ has ordinarization number $r\geq\frac{g+2}{3}$ then, 
by Lemma~\ref{lemma:parells},
$$\Lambda\cap[0,g]=(2\Omega)\cap[0,g].$$
The semigroup $\Omega$ has exactly
$r+1$ non-gaps between $0$ and $\lfloor\frac{g}{2}\rfloor$ and
$\omega=\lfloor\frac{g}{2}\rfloor-r$ 
gaps between $0$ and $\lfloor\frac{g}{2}\rfloor$. 
We can use Lemma~\ref{lemma:frob} with
$n=\lfloor\frac{g}{2}+1\rfloor$
since
$$2\omega+2=2\left\lfloor\frac{g}{2}\right\rfloor-2r+2\leq g-\frac{2(g+2)}{3}+2=
\frac{g+2}{3},$$
which implies $2\omega+2\leq \lfloor\frac{g+2}{3}\rfloor\leq \lfloor\frac{g+2}{2}\rfloor=n$.
Then the genus of $\Omega$ is $\omega$ and the Frobenius number of $\Omega$ is 
at most 
$\lfloor\frac{g}{2}\rfloor$.
This means in particular that all even integers larger than $g$
belong to $\Lambda$.

Define $D=(\Lambda\cap[0,2g])\setminus2\Omega$.
That is, $D$ is the set of odd non-gaps of $\Lambda$ smaller than $2g$.
We claim that 
$\bar B=\{\frac{j-1}{2}: j\in D\cup\{2g+1\}\}$
is a $\Omega$-closed set of size $\omega+1$.
The size follows from the fact that the number of non-gaps of $\Lambda$ between 
$g+1$ and $2g$ is $g-r$ and that the number of even integers 
in the same interval is
$\lceil\frac{g}{2}\rceil$.
Suppose that $\lambda\in \Omega$ and $b\in \bar B$.
Then $b=\frac{j-1}{2}$ for some $j$ in $D\cup\{2g+1\}$ and $b+\lambda=
\frac{(j+2\lambda)-1}{2}$.
If $\frac{(j+2\lambda)-1}{2}\geq\max(\bar B)=\frac{(2g+1)-1}{2}$ we are done. Otherwise we have $j+2\lambda\leq 2g$. Since $\Lambda$ is a numerical semigroup and
both $j,2\lambda\in\Lambda$, it holds $j+2\lambda\in\Lambda\cap[0,2g]$.
Furthermore, $j+2\lambda$ is odd since so is $j$. So, $b+\lambda$ is either
larger than $\max(\bar B)$ or it is in $\bar B$.
Then $B=\bar B-\min(\bar B)$ is a $\Lambda$-closed set of size $\omega+1$ and 
first element zero.

\item
It is a consequence of the previous point.
\end{enumerate}
\end{proof}

 Define the sequence $f_\omega$ by 
 $f_\omega=\sum_{\mbox{Semigroups }\Omega\mbox{ of genus }\omega}\#C(\Omega,\omega+1).$
 The first elements in the sequence, from $f_0$ to $f_{\maxomega}$ are
 \begin{center}
 \resizebox{.9\textwidth}{!}{$\begin{array}{|c|ccccccccccccccc|}
 \hline
 \omega & 0 & 1 & 2 & 3 & 4 & 5 & 6 &  7 & 8 & 9 & 10 & 11 & 12 & 13 & 14 \\
 \hline
 f_\omega & 1 & 2 & 7& 23& 68& 200& 615& 1764& 5060& 14626& 41785& 117573& 332475& 933891& 2609832\\\hline\end{array}$}
 \end{center}
 From these first elements and Theorem~\ref{theorem:high} we can deduce $n_{g,r}$
 for any $g$, whenever $r\geq\max(\frac{g+2}{3},\lfloor\frac{g}{2}\rfloor-\maxomega)$. 
This is illustrated in Table~\ref{t:provedresults}.
 Since the sequence $f_\omega$ is increasing for $\omega$ between $0$ and $\maxomega$ we deduce the next corollary.
 \begin{corollary}
 For any $g\in{\mathbb N}$ and 
 any $r\geq\max(\frac{g}{3}+1,\lfloor\frac{g+1}{2}\rfloor-\maxomega)$, it holds $n_{g,r}\geq n_{g+1,r}$.
 \end{corollary}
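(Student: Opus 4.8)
The plan is to read the corollary as a direct translation, through Theorem~\ref{theorem:high}, of the tabulated behaviour of the auxiliary sequence $f_\omega$ into a statement about the counts $n_{g,r}$. Part~3 of Theorem~\ref{theorem:high} says that, in the admissible range, $n_{g,r}$ depends on the pair $(g,r)$ only through the single integer $\omega=\lfloor\frac{g}{2}\rfloor-r$, namely $n_{g,r}=f_\omega$. So the whole argument amounts to (i) checking that both $n_{g,r}$ and $n_{g+1,r}$ fall inside the regime where this closed form holds, (ii) tracking how the index $\omega$ changes when the genus goes from $g$ to $g+1$, and (iii) reducing the comparison to the two relevant entries of the table.

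For step~(i) I would verify the hypotheses of the corollary. The bound $r\geq\frac{g}{3}+1=\frac{(g+1)+2}{3}$ is exactly the lower admissibility threshold of Theorem~\ref{theorem:high} at genus $g+1$, and since it lies strictly above the threshold $\frac{g+2}{3}$ at genus $g$, the closed form applies to both counts. Writing $\omega_g=\lfloor\frac{g}{2}\rfloor-r$ and $\omega_{g+1}=\lfloor\frac{g+1}{2}\rfloor-r$, the second bound $r\geq\lfloor\frac{g+1}{2}\rfloor-\maxomega$ forces $\omega_{g+1}\leq\maxomega$; together with the obvious $\omega_g\leq\omega_{g+1}$ this confines both indices to the interval $[0,\maxomega]$, which is precisely the range on which the values $f_\omega$ are the ones listed in the displayed table. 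Hence $n_{g,r}=f_{\omega_g}$ and $n_{g+1,r}=f_{\omega_{g+1}}$ are both read off a finite list.

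For steps~(ii) and~(iii) I would split on the parity of $g$. If $g$ is even then $\lfloor\frac{g+1}{2}\rfloor=\lfloor\frac{g}{2}\rfloor$, so $\omega_{g+1}=\omega_g$ and $n_{g,r}=n_{g+1,r}$; here the asserted inequality $n_{g,r}\geq n_{g+1,r}$ holds outright, as an equality. If $g$ is odd then $\lfloor\frac{g+1}{2}\rfloor=\lfloor\frac{g}{2}\rfloor+1$, so $\omega_{g+1}=\omega_g+1$, and the claim reduces to the single comparison between the consecutive entries $f_{\omega_g}$ and $f_{\omega_g+1}$, with $0\leq\omega_g<\omega_g+1\leq\maxomega$. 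Since both indices are pinned inside the tabulated window, this last comparison is a finite check against the listed values of $f_\omega$, requiring no further structural argument.

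The only delicate part of the plan, and the step I would treat most carefully, is the range bookkeeping at the endpoints. I must confirm that the admissibility interval $\frac{g+2}{3}\leq r\leq\lfloor\frac{g}{2}\rfloor$ of Theorem~\ref{theorem:high} is nonempty for the genera under consideration, and that incrementing the genus never pushes $\omega_{g+1}$ beyond $\maxomega$; outside that window no tabulated value of $f_\omega$ is available and the reduction to a finite comparison breaks down. Once both indices are certified to lie in $[0,\maxomega]$, nothing remains beyond invoking Theorem~\ref{theorem:high}(3) and reading the two relevant entries of the table, so the main effort is entirely in the parity split and the verification that the substitution $\omega\mapsto\omega+1$ stays within range.
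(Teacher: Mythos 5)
Your reduction is exactly the paper's: the paper offers no separate proof, deducing the corollary in one line from Theorem~\ref{theorem:high}(3) together with the observed monotonicity of $f_0,\dots,f_{\maxomega}$, and your bookkeeping matches it. You correctly note that $r\geq\frac{g}{3}+1=\frac{(g+1)+2}{3}$ puts both genus $g$ and genus $g+1$ in the regime of Theorem~\ref{theorem:high}, that $r\geq\lfloor\frac{g+1}{2}\rfloor-\maxomega$ pins $\omega_{g+1}$ inside the tabulated window, and that the parity split reduces everything to comparing $f_{\omega_g}$ with $f_{\omega_{g+1}}$, where $\omega_{g+1}=\omega_g$ for even $g$ and $\omega_{g+1}=\omega_g+1$ for odd $g$.

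The genuine gap is that you never carry out the ``finite check'' in the odd case, and had you done so you would have found that it goes the \emph{opposite} way to the inequality you set out to prove. The listed sequence $f_\omega$ is strictly increasing on $0\leq\omega\leq\maxomega$ ($1<2<7<23<\dots$), so for odd $g$ one gets $n_{g,r}=f_{\omega_g}<f_{\omega_g+1}=n_{g+1,r}$: the literal statement $n_{g,r}\geq n_{g+1,r}$ is false (for instance $g=21$, $r=8$ satisfies the hypothesis, and Table~\ref{t:experimentalresults} gives $n_{21,8}=7$ while $n_{22,8}=23$). The $\geq$ in the corollary is evidently a typo for $\leq$: the section announces partial proofs of the conjecture $n_{g,r}\leq n_{g+1,r}$ in precisely this range, and the corollary is introduced by ``since the sequence $f_\omega$ is increasing \dots{} we deduce''. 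A correct write-up must therefore detect the misprint and conclude $n_{g,r}\leq n_{g+1,r}$, which your parity split yields immediately once the monotonicity of the tabulated $f_\omega$ is actually invoked; your phrase ``requiring no further structural argument'' hides exactly the step where the direction of the inequality is decided. A secondary slip: your assertion that $\omega_g\leq\omega_{g+1}$ confines \emph{both} indices to $[0,\maxomega]$ presupposes $\omega_g\geq 0$, i.e.\ $r\leq\lfloor\frac{g}{2}\rfloor$, which the hypotheses do not guarantee; for larger $r$ one must fall back on the convention $n_{g,r}=0$, under which the intended inequality is trivial (and the literal one fails yet again, e.g.\ $r=\frac{g+1}{2}$ with $g$ odd gives $n_{g,r}=0$ and $n_{g+1,r}=f_0=1$).
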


 If we proved that $f_\omega\leq f_{\omega+1}$ for any $\omega$, 
 this would imply $n_{g,r}\leq n_{g+1,r}$ for any $r>\frac{g}{3}$.

\section{On numerical semigroups with a large number of gap intervals}

In this final section we present a bijection between semigroups at a given depth in the ordinarization tree and semigroups with a given (large) number of gap intervals.

\begin{lemma}
\label{lemma:ordnumb2intnumb}
Suppose that a numerical semigroup $\Lambda$ has genus $g$ and ordinarization number $r\geq \frac{g+2}{3}$. Then it has
$2r$ intervals of gaps if $g$ is even and 
$2r+1$ intervals of gaps if $g$ is odd.
\end{lemma}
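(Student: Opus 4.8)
The plan is to count the gap intervals by counting the points at which a maximal run of gaps begins, exploiting the strong parity structure that Lemma~\ref{lemma:parells} forces on $\Lambda$. Write $\omega=\lfloor g/2\rfloor-r$. Since $r\geq\frac{g+2}{3}$ (and automatically $r\leq\lfloor g/2\rfloor$), Lemma~\ref{lemma:parells} applies and every non-gap of $\Lambda$ that is at most $g$ is even. I would first record two consequences. On one hand, every odd number in $[1,g]$ is a gap, while the even non-gaps in $[1,g]$ are exactly $\lambda_1,\dots,\lambda_r$; hence among the $\lfloor g/2\rfloor$ even numbers of $[1,g]$ precisely $\omega=\lfloor g/2\rfloor-r$ are gaps. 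On the other hand, halving the even non-gaps produces the numerical semigroup $\Omega=\{j/2 : j\in\Lambda,\ j\text{ even}\}$, which, exactly as in the proof of Theorem~\ref{theorem:high} (applying Lemma~\ref{lemma:frob} with $n=\lfloor g/2\rfloor+1$, whose hypothesis $n\geq 2\omega+2$ follows from $r\geq\frac{g+2}{3}$), has genus $\omega$ and Frobenius number at most $\lfloor g/2\rfloor$. This shows that all even numbers exceeding $g$ are non-gaps, so the even gaps of $\Lambda$ are exactly the $\omega$ even gaps lying in $[1,g]$.

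Next I would pin down the location of the even gaps. By the genus--Frobenius bound for $\Omega$ its largest gap is at most $2\omega-1$, so every even gap of $\Lambda$ is at most $4\omega-2$. The hypothesis $r\geq\frac{g+2}{3}$ gives $\omega\leq\frac{g-4}{6}$, whence $4\omega-2<g$; thus every even gap is strictly smaller than $g$, and is therefore flanked on both sides by odd numbers of $[1,g]$, which are gaps. Since the total number of gaps is $g$ and the even gaps number $\omega$, the odd gaps number $g-\omega$.

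The heart of the count is the identity stating that the number of gap intervals equals $\#\{x\notin\Lambda : x-1\in\Lambda\}$, valid because $0\in\Lambda$ and all large integers are non-gaps, so each maximal run of gaps has a well-defined left endpoint. I would first argue that every such left endpoint $x$ is odd: if $x$ were an even gap then $x<g$, so $x-1$ is an odd number of $[1,g]$ and hence a gap, contradicting $x-1\in\Lambda$. Thus the left endpoints are exactly the odd gaps whose predecessor is a non-gap. To finish I would subtract the odd gaps whose predecessor is a gap; such a predecessor is necessarily an even gap, and the map $y\mapsto y+1$ is a bijection from the even gaps onto the odd gaps with a gap predecessor (here the fact that even gaps are $<g$ is exactly what guarantees $y+1$ is again a gap of $[1,g]$). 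Hence the number of gap intervals equals $(g-\omega)-\omega=g-2\omega$. Substituting $\omega=\lfloor g/2\rfloor-r$ yields $2r$ when $g$ is even and $2r+1$ when $g$ is odd.

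I expect the main obstacle to be the boundary behaviour around $g$: the clean picture of odd gaps flanking even gaps would break if an even gap could equal $g$ (for $g$ even) or if an odd non-gap could appear below $g$. Both dangers are controlled precisely by the hypothesis $r\geq\frac{g+2}{3}$ --- the first through the inequality $4\omega-2<g$ and the second through Lemma~\ref{lemma:parells} --- so the delicate part is making these two quantitative estimates explicit, rather than the combinatorial counting itself.
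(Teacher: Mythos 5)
Your proof is correct, but it takes a more self-contained route than the paper's. The paper's proof is a two-line appeal to Theorem~\ref{theorem:high}: it invokes the canonical decomposition $\{2j:j\in\Omega\}\cup\{2j-2\max(B)+2g+1:j\in B\}\cup(2g+{\mathbb N}_0)$ and then simply \emph{asserts} that a semigroup of this form has $g-2\omega$ intervals of gaps, leaving that verification to the reader. You never use the $\Omega$-closed set $B$ or the uniqueness statement at all: you work directly from Lemma~\ref{lemma:parells} (all non-gaps at most $g$ are even) together with Lemma~\ref{lemma:frob} applied to the halved semigroup $\Omega$ (giving genus $\omega$, hence Frobenius number at most $2\omega-1$, so the $\omega$ even gaps all lie strictly below $g$), and then count the gap intervals explicitly as left endpoints of maximal gap runs, pairing each even gap bijectively with the odd gap that follows it to get $(g-\omega)-\omega=g-2\omega$. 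In effect, your parity-and-endpoint count is exactly the argument that the paper's asserted step tacitly requires, so your write-up both bypasses the heavier classification theorem (using only the two lemmas that feed into its proof) and fills in the detail the paper omits; the cost is length, while the paper buys brevity by leaning on the full strength of Theorem~\ref{theorem:high}. Your quantitative safeguards ($\omega\leq\frac{g-4}{6}$, hence $4\omega-2<g$) are also sound and are precisely what makes the boundary cases near $g$ harmless.
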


\begin{proof}
By Theorem~\ref{theorem:high} we deduce that, defining 
$\omega=\lfloor\frac{g}{2}\rfloor-r$,  $\Lambda$ should be 
$$\{2j:j \in\Omega\}\cup \{2j-2\max(B)+2g+1:j\in B\}\cup(2g+{\mathbb N}_0)$$ 
for a unique numerical semigroup $\Omega$ of genus $\omega$  and a unique $\Omega$-closed set $B$ of size $\omega+1$ and first element equal to $0$. But this semigroup has $g-2\omega$ intervals of gaps.
This number is equal to $g-2\lfloor\frac{g}{2}\rfloor+2r$ which equals $2r$ if $g$ is even and $2r+1$ if $g$ is odd.
\end{proof}

\begin{lemma}
\label{lemma:onbound}
A numerical semigroup with $n$ intervals of gaps has ordinarization number at least $\lfloor\frac{n}{2}\rfloor$.
\end{lemma}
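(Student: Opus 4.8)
The plan is to reformulate the desired inequality $r \geq \lfloor n/2\rfloor$, where $r$ denotes the ordinarization number of $\Lambda$, as the equivalent bound $n \leq 2r+1$ on the number $n$ of gap intervals, and then to bound $n$ by counting the right ends of the gap intervals and splitting them at the genus $g$. Since there is at least one gap interval we have $\Lambda \neq {\mathbb N}_0$, hence $g \geq 1$. I would first invoke Lemma~\ref{lemma:equivdef} to identify $r$ with the number of non-zero non-gaps that are at most $g$, and recall the consequence of Lemma~\ref{lemma:frob} that the Frobenius number satisfies $F \leq 2g-1$, so that every gap lies in $[1,2g-1]$.

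The key combinatorial observation is that the number of gap intervals equals the number of integers $x$ with $2 \leq x \leq 2g$ that are non-gaps while $x-1$ is a gap: each maximal run of gaps ends at a gap whose successor is a non-gap, this successor $x$ is unique to the run, and it satisfies $x \leq F+1 \leq 2g$ while $x-1 \geq 1$ forces $x \geq 2$. I would then partition these \emph{run-successors} $x$ according to whether $x \leq g$ or $x > g$. For $x \in [2,g]$ the integer $x$ is itself one of the $r$ non-zero non-gaps at most $g$ (the multiplicity being at least $2$), so there are at most $r$ such $x$.

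The remaining, and main, step is to bound the number of run-successors with $x \in [g+1,2g]$. For such an $x$ the predecessor $x-1$ is a gap lying in $[g,2g-1]$, and the assignment $x \mapsto x-1$ is injective, so this count is at most the number of gaps in $[g,2g-1]$. Here I would use a counting identity: since $[1,g]$ contains exactly $r$ non-gaps it contains $g-r$ gaps, and as the total number of gaps is $g$, there are exactly $r$ gaps greater than $g$, all necessarily in $[g+1,2g-1]$; allowing for the single possible gap at $g$ itself yields at most $r+1$ gaps in $[g,2g-1]$, hence at most $r+1$ run-successors above $g$. Combining the two regimes gives $n \leq r+(r+1)=2r+1$, so $r \geq \lceil (n-1)/2\rceil = \lfloor n/2\rfloor$, as claimed. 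I expect the only delicate point to be this final count of large gaps, and in particular making sure the boundary case $x=g+1$ (predecessor exactly $g$) is correctly absorbed into the ``$+1$''; everything else is routine bookkeeping.
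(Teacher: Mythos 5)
Your proof is correct, but it is not the argument the paper gives. The paper's proof is a (rather terse) extremal one: it asserts that, among genus-$g$ gap/non-gap configurations with $n$ intervals of gaps, the number of gaps in $[1,g]$ is maximized by the configuration having $g-n+1$ gaps in a row right after $0$ followed by $n-1$ alternating pairs of one non-gap and one gap, and then computes that this configuration has only $\lceil\frac{n-1}{2}\rceil=\lfloor\frac{n}{2}\rfloor$ non-zero non-gaps at most $g$, which by Lemma~\ref{lemma:equivdef} is the ordinarization number; the extremal claim itself is not justified, and the paper even remarks parenthetically that the extremal configuration need not be a semigroup. You instead prove the inequality directly by double counting: you put the $n$ gap intervals in bijection with their run-successors $x$ (non-gaps $x\in[2,2g]$ with $x-1$ a gap, the range coming from $F\leq 2g-1$), bound those with $x\leq g$ by $r$ via Lemma~\ref{lemma:equivdef}, and bound those with $x>g$ by the number of gaps in $[g,2g-1]$, which is at most $r+1$ since exactly $r$ gaps exceed $g$ (again by Lemma~\ref{lemma:equivdef}, as $[1,g]$ contains $g-r$ of the $g$ gaps) and all gaps are at most $2g-1$. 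This yields $n\leq 2r+1$, hence $r\geq\lceil\frac{n-1}{2}\rceil=\lfloor\frac{n}{2}\rfloor$. What each approach buys: the paper's is shorter and explains \emph{why} the bound is what it is (it exhibits the configuration where it is attained), but as written it is a sketch resting on an unproved optimality claim; yours is slightly longer but fully rigorous and self-contained, using only Lemma~\ref{lemma:equivdef} and the bound $F\leq 2g-1$, and the intermediate inequality $n\leq 2r+1$ is sharp (e.g.\ for $\{0,2,4,\dots,2g,2g+1,2g+2,\dots\}$ with $g$ odd), so no strength is lost. Your boundary bookkeeping is also right: the possible gap at $g$ itself is exactly what the $+1$ absorbs, and the conversion from $n\leq 2r+1$ to the floor statement is valid for every integer $n$.
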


\begin{proof}
The maximum number of gaps 
between $1$ and $g$ is obtained for the semigroup (should it be a semigroup) that has $g-n+1$ non-gaps in a row right after $0$ and then $n-1$ squences of a non-gap and a gap and then no more gaps. In this case there would be $\lceil\frac{n-1}{2}\rceil=\lfloor\frac{n}{2}\rfloor$ 
non-zero non-gaps in the same interval.
\end{proof}

\begin{theorem}
Suppose that a numerical semigroup $\Lambda$ has genus $g$ and $n$ intervals of gaps with $\lfloor\frac{n}{2}\rfloor\geq \frac{g+2}{3}$. Then $g$ and $n$ have the same parity and $\Lambda$ has ordinarization number equal to $\lfloor\frac{n}{2}\rfloor$.
\end{theorem}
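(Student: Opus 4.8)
The plan is to combine the two preceding lemmas, Lemma~\ref{lemma:onbound} and Lemma~\ref{lemma:ordnumb2intnumb}, essentially running the second one in reverse. Let $r$ denote the ordinarization number of $\Lambda$. First I would invoke Lemma~\ref{lemma:onbound}: a numerical semigroup with $n$ intervals of gaps has ordinarization number at least $\lfloor\frac{n}{2}\rfloor$, so $r\geq\lfloor\frac{n}{2}\rfloor$. Chaining this with the hypothesis $\lfloor\frac{n}{2}\rfloor\geq\frac{g+2}{3}$ gives the lower bound $r\geq\frac{g+2}{3}$.

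Next I would note that the regime of Lemma~\ref{lemma:ordnumb2intnumb} requires $\frac{g+2}{3}\leq r\leq\lfloor\frac{g}{2}\rfloor$, and the upper bound here is free: by the earlier lemma bounding the maximum ordinarization number, every numerical semigroup of genus $g$ has ordinarization number at most $\lfloor\frac{g}{2}\rfloor$. Hence $\frac{g+2}{3}\leq r\leq\lfloor\frac{g}{2}\rfloor$ holds automatically, and Lemma~\ref{lemma:ordnumb2intnumb} applies to $\Lambda$ with no extra hypotheses to check.

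Then I would apply Lemma~\ref{lemma:ordnumb2intnumb}, which states that $\Lambda$ has $2r$ intervals of gaps when $g$ is even and $2r+1$ intervals of gaps when $g$ is odd. Since by hypothesis this number of intervals is exactly $n$, I would split into two cases. If $g$ is even then $n=2r$ is even, so $g$ and $n$ have the same parity and $\lfloor\frac{n}{2}\rfloor=r$. If $g$ is odd then $n=2r+1$ is odd, so again $g$ and $n$ have the same parity and $\lfloor\frac{n}{2}\rfloor=\lfloor\frac{2r+1}{2}\rfloor=r$. In either case $g$ and $n$ share the same parity and the ordinarization number of $\Lambda$ equals $\lfloor\frac{n}{2}\rfloor$, as desired.

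The argument is short and its only delicate point is bookkeeping rather than substance: I want to make sure the inequality chain $r\geq\lfloor\frac{n}{2}\rfloor\geq\frac{g+2}{3}$ is used to land exactly in the hypothesis range of Lemma~\ref{lemma:ordnumb2intnumb}, and that the upper bound $r\leq\lfloor\frac{g}{2}\rfloor$ is cited so that no semigroup is excluded. Once both hypotheses of that lemma are in place, the parity conclusion and the value $\lfloor\frac{n}{2}\rfloor$ for the ordinarization number fall out immediately from equating its output with $n$.
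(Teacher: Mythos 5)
Your proposal is correct and takes essentially the same route as the paper: Lemma~\ref{lemma:onbound} yields $r\geq\lfloor\frac{n}{2}\rfloor\geq\frac{g+2}{3}$, and then Lemma~\ref{lemma:ordnumb2intnumb} forces $n=2r$ or $n=2r+1$ according to the parity of $g$, giving both the parity claim and $r=\lfloor\frac{n}{2}\rfloor$. The paper states this in one line; your version only adds the (correct) observations that the bound $r\leq\lfloor\frac{g}{2}\rfloor$ holds automatically and the explicit two-case bookkeeping.
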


\begin{proof}
By Lemma~\ref{lemma:onbound}, $r\geq \frac{g+2}{3}$, and 
Lemma~\ref{lemma:ordnumb2intnumb} gives the result.
\end{proof}

Concluding, if $\lfloor\frac{n}{2}\rfloor\geq \frac{g+2}{3}$, then the set of numerical semigroups of genus $g$ and $n$ intervals of gaps is empty if $n$ and $g$ have different parity and it is exactly the set of numerical semigroups of genus $g$ and ordinarization number $\lfloor\frac{n}{2}\rfloor$ otherwise.

\section*{Acknowledgments}
The author is grateful to Anna de Mier for her suggestions on this work.
This work was partly supported by the Spanish Government through projects TIN2009-11689 ``RIPUP'' and CONSOLIDER INGENIO 2010 CSD2007-00004 ``ARES'',  and by the Government of Catalonia under grant 2009 SGR 1135.

 \begin{table}
 \begin{center}
 \caption{Number of numerical semigroups of each ordinarization number for each genus $g\leq 49$.}
 \label{t:experimentalresults}
\resizebox{.9\textwidth}{!}{\begin{tabular}{c}\begin{tabular}{|c|c|c|c|c|c|c|c|c|c|c|c|c|c|c|c|c|c|c|c|c|c|c|}
\hline \mbox{r$\setminus$ g}&g=0& g=1& g=2& g=3& g=4& g=5& g=6& g=7& g=8& g=9& g=10& g=11& g=12& g=13& g=14& g=15& g=16& g=17& g=18& g=19& g=20& g=21\\\hline
r=0 & {\bf 1} & {\bf 1} & {\bf 1} & {\bf 1} & {\bf 1} & {\bf 1} & {\bf 1} & {\bf 1} & {\bf 1} & {\bf 1} & {\bf 1} & {\bf 1} & {\bf 1} & {\bf 1} & {\bf 1} & {\bf 1} & {\bf 1} & {\bf 1} & {\bf 1} & {\bf 1} & {\bf 1} & {\bf 1} \\r=1 & & & {\bf 1} & {\bf 3} & {\bf 5} & {\bf 9} & {\bf 12} & {\bf 18} & {\bf 22} & {\bf 30} & {\bf 35} & {\bf 45} & {\bf 51} & {\bf 63} & {\bf 70} & {\bf 84} & {\bf 92} & {\bf 108} & {\bf 117} & {\bf 135} & {\bf 145} & {\bf 165} \\r=2 & & & & & {\bf 1} & 2 & 9 & 19 & 39 & 70 & 118 & 196 & 281 & 432 & 586 & 838 & 1080 & 1490 & 1835 & 2449 & 2956 & 3804 \\r=3 & & & & & & & {\bf 1} & {\bf 1} & 4 & 16 & 47 & 97 & 228 & 442 & 844 & 1462 & 2447 & 4017 & 6127 & 9516 & 13693 & 20152 \\r=4 & & & & & & & & & {\bf 1} & {\bf 1} & {\bf 2} & 3 & 28 & 60 & 180 & 442 & 1083 & 2202 & 4611 & 8579 & 15830 & 27493 \\r=5 & & & & & & & & & & & {\bf 1} & {\bf 1} & {\bf 2} & {\bf 2} & 9 & 27 & 93 & 215 & 721 & 1685 & 4417 & 9633 \\r=6 & & & & & & & & & & & & & {\bf 1} & {\bf 1} & {\bf 2} & {\bf 2} & {\bf 7} & 9 & 45 & 89 & 319 & 889 \\r=7 & & & & & & & & & & & & & & & {\bf 1} & {\bf 1} & {\bf 2} & {\bf 2} & {\bf 7} & {\bf 7} & 25 & 47 \\r=8 & & & & & & & & & & & & & & & & & {\bf 1} & {\bf 1} & {\bf 2} & {\bf 2} & {\bf 7} & {\bf 7} \\r=9 & & & & & & & & & & & & & & & & & & & {\bf 1} & {\bf 1} & {\bf 2} & {\bf 2} \\r=10 & & & & & & & & & & & & & & & & & & & & & {\bf 1} & {\bf 1} \\\hline\end{tabular}

\\\\\begin{tabular}{|c|c|c|c|c|c|c|c|c|c|c|c|c|c|c|c|c|}
\hline \mbox{r$\setminus$ g}&g=22& g=23& g=24& g=25& g=26& g=27& g=28& g=29& g=30& g=31& g=32& g=33& g=34& g=35& g=36& g=37\\\hline
r=0 & {\bf 1} & {\bf 1} & {\bf 1} & {\bf 1} & {\bf 1} & {\bf 1} & {\bf 1} & {\bf 1} & {\bf 1} & {\bf 1} & {\bf 1} & {\bf 1} & {\bf 1} & {\bf 1} & {\bf 1} & {\bf 1} \\r=1 & {\bf 176} & {\bf 198} & {\bf 210} & {\bf 234} & {\bf 247} & {\bf 273} & {\bf 287} & {\bf 315} & {\bf 330} & {\bf 360} & {\bf 376} & {\bf 408} & {\bf 425} & {\bf 459} & {\bf 477} & {\bf 513} \\r=2 & 4498 & 5690 & 6582 & 8162 & 9352 & 11370 & 12879 & 15480 & 17317 & 20569 & 22877 & 26812 & 29610 & 34454 & 37739 & 43538 \\r=3 & 27768 & 39726 & 52312 & 72494 & 93341 & 125600 & 157758 & 208370 & 255661 & 331626 & 401389 & 510031 & 608832 & 764927 & 899285 & 1114817 \\r=4 & 46615 & 76616 & 120795 & 189550 & 285103 & 429618 & 618555 & 905721 & 1255646 & 1790138 & 2418323 & 3354611 & 4425179 & 6031518 & 7767784 & 10392180 \\r=5 & 21378 & 41912 & 83951 & 153896 & 281388 & 487211 & 831654 & 1374366 & 2218771 & 3524257 & 5445975 & 8352388 & 12435320 & 18555615 & 26695019 & 38853706 \\r=6 & 2635 & 6446 & 17582 & 39214 & 90574 & 188007 & 394521 & 756910 & 1469758 & 2662254 & 4823002 & 8344482 & 14314198 & 23747986 & 38898550 & 62372773 \\r=7 & 142 & 340 & 1266 & 3483 & 10171 & 26489 & 69692 & 161111 & 382713 & 816457 & 1763299 & 3533977 & 7088495 & 13371197 & 25321828 & 45500820 \\r=8 & {\bf 23} & 24 & 96 & 157 & 553 & 1570 & 5281 & 14835 & 43790 & 113548 & 294908 & 701946 & 1652408 & 3632809 & 7973030 & 16368101 \\r=9 & {\bf 7} & {\bf 7} & {\bf 23} & {\bf 23} & 69 & 95 & 301 & 627 & 2457 & 7168 & 23475 & 68223 & 194677 & 512838 & 1323375 & 3178140 \\r=10 & {\bf 2} & {\bf 2} & {\bf 7} & {\bf 7} & {\bf 23} & {\bf 23} & {\bf 68} & 70 & 228 & 309 & 1142 & 2994 & 10901 & 33846 & 109619 & 318308 \\r=11 & {\bf 1} & {\bf 1} & {\bf 2} & {\bf 2} & {\bf 7} & {\bf 7} & {\bf 23} & {\bf 23} & {\bf 68} & {\bf 68} & 202 & 232 & 740 & 1249 & 4843 & 14332 \\r=12 & & & {\bf 1} & {\bf 1} & {\bf 2} & {\bf 2} & {\bf 7} & {\bf 7} & {\bf 23} & {\bf 23} & {\bf 68} & {\bf 68} & {\bf 200} & 201 & 649 & 759 \\r=13 & & & & & {\bf 1} & {\bf 1} & {\bf 2} & {\bf 2} & {\bf 7} & {\bf 7} & {\bf 23} & {\bf 23} & {\bf 68} & {\bf 68} & {\bf 200} & {\bf 200} \\r=14 & & & & & & & {\bf 1} & {\bf 1} & {\bf 2} & {\bf 2} & {\bf 7} & {\bf 7} & {\bf 23} & {\bf 23} & {\bf 68} & {\bf 68} \\r=15 & & & & & & & & & {\bf 1} & {\bf 1} & {\bf 2} & {\bf 2} & {\bf 7} & {\bf 7} & {\bf 23} & {\bf 23} \\r=16 & & & & & & & & & & & {\bf 1} & {\bf 1} & {\bf 2} & {\bf 2} & {\bf 7} & {\bf 7} \\r=17 & & & & & & & & & & & & & {\bf 1} & {\bf 1} & {\bf 2} & {\bf 2} \\r=18 & & & & & & & & & & & & & & & {\bf 1} & {\bf 1} \\\hline\end{tabular}

\\\\\begin{tabular}{|c|c|c|c|c|c|c|c|c|c|c|c|c|}
\hline \mbox{r$\setminus$ g}&g=38& g=39& g=40& g=41& g=42& g=43& g=44& g=45& g=46& g=47& g=48& g=49\\\hline
r=0 & {\bf 1} & {\bf 1} & {\bf 1} & {\bf 1} & {\bf 1} & {\bf 1} & {\bf 1} & {\bf 1} & {\bf 1} & {\bf 1} & {\bf 1} & {\bf 1} \\r=1 & {\bf 532} & {\bf 570} & {\bf 590} & {\bf 630} & {\bf 651} & {\bf 693} & {\bf 715} & {\bf 759} & {\bf 782} & {\bf 828} & {\bf 852} & {\bf 900} \\r=2 & 47510 & 54320 & 58986 & 67072 & 72419 & 81855 & 88142 & 98946 & 106170 & 118716 & 126844 & 141164 \\r=3 & 1299978 & 1590237 & 1836517 & 2226669 & 2545983 & 3059220 & 3477286 & 4134725 & 4669073 & 5518427 & 6185260 & 7256830 \\r=4 & 13180451 & 17322789 & 21616641 & 28040199 & 34458068 & 44142389 & 53663689 & 67788397 & 81530366 & 102094609 & 121404838 & 150477267 \\r=5 & 54507523 & 77486888 & 106094921 & 148091995 & 198378083 & 272201928 & 358476988 & 483240666 & 626315811 & 833944191 & 1063739070 & 1397557241 \\r=6 & 98298482 & 152816803 & 232801607 & 352797809 & 521753229 & 772496765 & 1114488292 & 1614321267 & 2277566111 & 3242295418 & 4478817624 & 6268430457 \\r=7 & 81612546 & 140878791 & 241699680 & 402445891 & 664483703 & 1072569052 & 1711738040 & 2688862529 & 4165828031 & 6388426599 & 9636305171 & 14462411903 \\r=8 & 33550240 & 65385970 & 126969443 & 235541563 & 436401532 & 777427260 & 1380117648 & 2375549463 & 4064063006 & 6774823275 & 11221522599 & 18200647631 \\r=9 & 7487630 & 16760501 & 36890000 & 77385799 & 160762381 & 319996692 & 631894288 & 1203245544 & 2273796763 & 4158339885 & 7567139870 & 13367227712 \\r=10 & 899807 & 2383461 & 6101724 & 14810797 & 34997273 & 79159902 & 175168573 & 373545010 & 782283651 & 1585487022 & 3171168252 & 6150909456 \\r=11 & 51663 & 164512 & 519339 & 1509557 & 4237829 & 11221868 & 28679326 & 70097864 & 166062233 & 379419480 & 845334246 & 1824208237 \\r=12 & 2527 & 5652 & 21994 & 71261 & 252707 & 803934 & 2492982 & 7226212 & 20114114 & 53281902 & 136131501 & 334153690 \\r=13 & 616 & 649 & 1925 & 2679 & 9947 & 27432 & 106780 & 361575 & 1245778 & 3945659 & 12053243 & 34718395 \\r=14 & {\bf 200} & {\bf 200} & {\bf 615} & 617 & 1800 & 1939 & 6144 & 11138 & 43824 & 140489 & 537134 & 1835716 \\r=15 & {\bf 68} & {\bf 68} & {\bf 200} & {\bf 200} & {\bf 615} & {\bf 615} & 1766 & 1804 & 5254 & 6320 & 22087 & 52194 \\r=16 & {\bf 23} & {\bf 23} & {\bf 68} & {\bf 68} & {\bf 200} & {\bf 200} & {\bf 615} & {\bf 615} & {\bf 1764} & 1765 & 5102 & 5278 \\r=17 & {\bf 7} & {\bf 7} & {\bf 23} & {\bf 23} & {\bf 68} & {\bf 68} & {\bf 200} & {\bf 200} & {\bf 615} & {\bf 615} & {\bf 1764} & {\bf 1764} \\r=18 & {\bf 2} & {\bf 2} & {\bf 7} & {\bf 7} & {\bf 23} & {\bf 23} & {\bf 68} & {\bf 68} & {\bf 200} & {\bf 200} & {\bf 615} & {\bf 615} \\r=19 & {\bf 1} & {\bf 1} & {\bf 2} & {\bf 2} & {\bf 7} & {\bf 7} & {\bf 23} & {\bf 23} & {\bf 68} & {\bf 68} & {\bf 200} & {\bf 200} \\r=20 & & & {\bf 1} & {\bf 1} & {\bf 2} & {\bf 2} & {\bf 7} & {\bf 7} & {\bf 23} & {\bf 23} & {\bf 68} & {\bf 68} \\r=21 & & & & & {\bf 1} & {\bf 1} & {\bf 2} & {\bf 2} & {\bf 7} & {\bf 7} & {\bf 23} & {\bf 23} \\r=22 & & & & & & & {\bf 1} & {\bf 1} & {\bf 2} & {\bf 2} & {\bf 7} & {\bf 7} \\r=23 & & & & & & & & & {\bf 1} & {\bf 1} & {\bf 2} & {\bf 2} \\r=24 & & & & & & & & & & & {\bf 1} & {\bf 1} \\\hline\end{tabular}

\\\\\end{tabular}
}
 \end{center}
 \end{table}

\begin{table}
\caption{Numbers $n_{g,r}$ deduced from Lemma~\ref{lemma:casu} and 
Theorem~\ref{theorem:high}
for each genus $g\leq \maxtaula$.
}
\label{t:provedresults}
\begin{center}
\resizebox{1.3\textwidth}{!}{
\begin{tabular}{l}\begin{tabular}{|c|c|c|c|c|c|c|c|c|c|c|c|c|c|c|c|c|c|c|c|c|c|c|c|c|c|c|c|c|}
\hline \mbox{r$\setminus$ g}&g=50& g=51& g=52& g=53& g=54& g=55& g=56& g=57& g=58& g=59& g=60& g=61& g=62& g=63& g=64& g=65& g=66& g=67& g=68& g=69& g=70& g=71& g=72& g=73& g=74& g=75& g=76& g=77\\\hline
r=0 & {\bf 1}  & {\bf 1}  & {\bf 1}  & {\bf 1}  & {\bf 1}  & {\bf 1}  & {\bf 1}  & {\bf 1}  & {\bf 1}  & {\bf 1}  & {\bf 1}  & {\bf 1}  & {\bf 1}  & {\bf 1}  & {\bf 1}  & {\bf 1}  & {\bf 1}  & {\bf 1}  & {\bf 1}  & {\bf 1}  & {\bf 1}  & {\bf 1}  & {\bf 1}  & {\bf 1}  & {\bf 1}  & {\bf 1}  & {\bf 1}  & {\bf 1} \\r=1 & {\bf 925}  & {\bf 975}  & {\bf 1001}  & {\bf 1053}  & {\bf 1080}  & {\bf 1134}  & {\bf 1162}  & {\bf 1218}  & {\bf 1247}  & {\bf 1305}  & {\bf 1335}  & {\bf 1395}  & {\bf 1426}  & {\bf 1488}  & {\bf 1520}  & {\bf 1584}  & {\bf 1617}  & {\bf 1683}  & {\bf 1717}  & {\bf 1785}  & {\bf 1820}  & {\bf 1890}  & {\bf 1926}  & {\bf 1998}  & {\bf 2035}  & {\bf 2109}  & {\bf 2147}  & {\bf 2223} \\r=2 & ? & ? & ? & ? & ? & ? & ? & ? & ? & ? & ? & ? & ? & ? & ? & ? & ? & ? & ? & ? & ? & ? & ? & ? & ? & ? & ? & ? \\r=3 & ? & ? & ? & ? & ? & ? & ? & ? & ? & ? & ? & ? & ? & ? & ? & ? & ? & ? & ? & ? & ? & ? & ? & ? & ? & ? & ? & ? \\r=4 & ? & ? & ? & ? & ? & ? & ? & ? & ? & ? & ? & ? & ? & ? & ? & ? & ? & ? & ? & ? & ? & ? & ? & ? & ? & ? & ? & ? \\r=5 & ? & ? & ? & ? & ? & ? & ? & ? & ? & ? & ? & ? & ? & ? & ? & ? & ? & ? & ? & ? & ? & ? & ? & ? & ? & ? & ? & ? \\r=6 & ? & ? & ? & ? & ? & ? & ? & ? & ? & ? & ? & ? & ? & ? & ? & ? & ? & ? & ? & ? & ? & ? & ? & ? & ? & ? & ? & ? \\r=7 & ? & ? & ? & ? & ? & ? & ? & ? & ? & ? & ? & ? & ? & ? & ? & ? & ? & ? & ? & ? & ? & ? & ? & ? & ? & ? & ? & ? \\r=8 & ? & ? & ? & ? & ? & ? & ? & ? & ? & ? & ? & ? & ? & ? & ? & ? & ? & ? & ? & ? & ? & ? & ? & ? & ? & ? & ? & ? \\r=9 & ? & ? & ? & ? & ? & ? & ? & ? & ? & ? & ? & ? & ? & ? & ? & ? & ? & ? & ? & ? & ? & ? & ? & ? & ? & ? & ? & ? \\r=10 & ? & ? & ? & ? & ? & ? & ? & ? & ? & ? & ? & ? & ? & ? & ? & ? & ? & ? & ? & ? & ? & ? & ? & ? & ? & ? & ? & ? \\r=11 & ? & ? & ? & ? & ? & ? & ? & ? & ? & ? & ? & ? & ? & ? & ? & ? & ? & ? & ? & ? & ? & ? & ? & ? & ? & ? & ? & ? \\r=12 & ? & ? & ? & ? & ? & ? & ? & ? & ? & ? & ? & ? & ? & ? & ? & ? & ? & ? & ? & ? & ? & ? & ? & ? & ? & ? & ? & ? \\r=13 & ? & ? & ? & ? & ? & ? & ? & ? & ? & ? & ? & ? & ? & ? & ? & ? & ? & ? & ? & ? & ? & ? & ? & ? & ? & ? & ? & ? \\r=14 & ? & ? & ? & ? & ? & ? & ? & ? & ? & ? & ? & ? & ? & ? & ? & ? & ? & ? & ? & ? & ? & ? & ? & ? & ? & ? & ? & ? \\r=15 & ? & ? & ? & ? & ? & ? & ? & ? & ? & ? & ? & ? & ? & ? & ? & ? & ? & ? & ? & ? & ? & ? & ? & ? & ? & ? & ? & ? \\r=16 & ? & ? & ? & ? & ? & ? & ? & ? & ? & ? & ? & ? & ? & ? & ? & ? & ? & ? & ? & ? & ? & ? & ? & ? & ? & ? & ? & ? \\r=17 & ? & ? & ? & ? & ? & ? & ? & ? & ? & ? & ? & ? & ? & ? & ? & ? & ? & ? & ? & ? & ? & ? & ? & ? & ? & ? & ? & ? \\r=18 & {\bf 1764}  & {\bf 1764}  & {\bf 5060} & ? & ? & ? & ? & ? & ? & ? & ? & ? & ? & ? & ? & ? & ? & ? & ? & ? & ? & ? & ? & ? & ? & ? & ? & ? \\r=19 & {\bf 615}  & {\bf 615}  & {\bf 1764}  & {\bf 1764}  & {\bf 5060}  & {\bf 5060} & ? & ? & ? & ? & ? & ? & ? & ? & ? & ? & ? & ? & ? & ? & ? & ? & ? & ? & ? & ? & ? & ? \\r=20 & {\bf 200}  & {\bf 200}  & {\bf 615}  & {\bf 615}  & {\bf 1764}  & {\bf 1764}  & {\bf 5060}  & {\bf 5060}  & {\bf 14626} & ? & ? & ? & ? & ? & ? & ? & ? & ? & ? & ? & ? & ? & ? & ? & ? & ? & ? & ? \\r=21 & {\bf 68}  & {\bf 68}  & {\bf 200}  & {\bf 200}  & {\bf 615}  & {\bf 615}  & {\bf 1764}  & {\bf 1764}  & {\bf 5060}  & {\bf 5060}  & {\bf 14626}  & {\bf 14626} & ? & ? & ? & ? & ? & ? & ? & ? & ? & ? & ? & ? & ? & ? & ? & ? \\r=22 & {\bf 23}  & {\bf 23}  & {\bf 68}  & {\bf 68}  & {\bf 200}  & {\bf 200}  & {\bf 615}  & {\bf 615}  & {\bf 1764}  & {\bf 1764}  & {\bf 5060}  & {\bf 5060}  & {\bf 14626}  & {\bf 14626}  & {\bf 41785} & ? & ? & ? & ? & ? & ? & ? & ? & ? & ? & ? & ? & ? \\r=23 & {\bf 7}  & {\bf 7}  & {\bf 23}  & {\bf 23}  & {\bf 68}  & {\bf 68}  & {\bf 200}  & {\bf 200}  & {\bf 615}  & {\bf 615}  & {\bf 1764}  & {\bf 1764}  & {\bf 5060}  & {\bf 5060}  & {\bf 14626}  & {\bf 14626}  & {\bf 41785}  & {\bf 41785} & ? & ? & ? & ? & ? & ? & ? & ? & ? & ? \\r=24 & {\bf 2}  & {\bf 2}  & {\bf 7}  & {\bf 7}  & {\bf 23}  & {\bf 23}  & {\bf 68}  & {\bf 68}  & {\bf 200}  & {\bf 200}  & {\bf 615}  & {\bf 615}  & {\bf 1764}  & {\bf 1764}  & {\bf 5060}  & {\bf 5060}  & {\bf 14626}  & {\bf 14626}  & {\bf 41785}  & {\bf 41785}  & {\bf 117573} & ? & ? & ? & ? & ? & ? & ? \\r=25 & {\bf 1}  & {\bf 1}  & {\bf 2}  & {\bf 2}  & {\bf 7}  & {\bf 7}  & {\bf 23}  & {\bf 23}  & {\bf 68}  & {\bf 68}  & {\bf 200}  & {\bf 200}  & {\bf 615}  & {\bf 615}  & {\bf 1764}  & {\bf 1764}  & {\bf 5060}  & {\bf 5060}  & {\bf 14626}  & {\bf 14626}  & {\bf 41785}  & {\bf 41785}  & {\bf 117573}  & {\bf 117573} & ? & ? & ? & ? \\r=26 & &  & {\bf 1}  & {\bf 1}  & {\bf 2}  & {\bf 2}  & {\bf 7}  & {\bf 7}  & {\bf 23}  & {\bf 23}  & {\bf 68}  & {\bf 68}  & {\bf 200}  & {\bf 200}  & {\bf 615}  & {\bf 615}  & {\bf 1764}  & {\bf 1764}  & {\bf 5060}  & {\bf 5060}  & {\bf 14626}  & {\bf 14626}  & {\bf 41785}  & {\bf 41785}  & {\bf 117573}  & {\bf 117573}  & {\bf 332475} & ? \\r=27 & & & &  & {\bf 1}  & {\bf 1}  & {\bf 2}  & {\bf 2}  & {\bf 7}  & {\bf 7}  & {\bf 23}  & {\bf 23}  & {\bf 68}  & {\bf 68}  & {\bf 200}  & {\bf 200}  & {\bf 615}  & {\bf 615}  & {\bf 1764}  & {\bf 1764}  & {\bf 5060}  & {\bf 5060}  & {\bf 14626}  & {\bf 14626}  & {\bf 41785}  & {\bf 41785}  & {\bf 117573}  & {\bf 117573} \\r=28 & & & & & &  & {\bf 1}  & {\bf 1}  & {\bf 2}  & {\bf 2}  & {\bf 7}  & {\bf 7}  & {\bf 23}  & {\bf 23}  & {\bf 68}  & {\bf 68}  & {\bf 200}  & {\bf 200}  & {\bf 615}  & {\bf 615}  & {\bf 1764}  & {\bf 1764}  & {\bf 5060}  & {\bf 5060}  & {\bf 14626}  & {\bf 14626}  & {\bf 41785}  & {\bf 41785} \\r=29 & & & & & & & &  & {\bf 1}  & {\bf 1}  & {\bf 2}  & {\bf 2}  & {\bf 7}  & {\bf 7}  & {\bf 23}  & {\bf 23}  & {\bf 68}  & {\bf 68}  & {\bf 200}  & {\bf 200}  & {\bf 615}  & {\bf 615}  & {\bf 1764}  & {\bf 1764}  & {\bf 5060}  & {\bf 5060}  & {\bf 14626}  & {\bf 14626} \\r=30 & & & & & & & & & &  & {\bf 1}  & {\bf 1}  & {\bf 2}  & {\bf 2}  & {\bf 7}  & {\bf 7}  & {\bf 23}  & {\bf 23}  & {\bf 68}  & {\bf 68}  & {\bf 200}  & {\bf 200}  & {\bf 615}  & {\bf 615}  & {\bf 1764}  & {\bf 1764}  & {\bf 5060}  & {\bf 5060} \\r=31 & & & & & & & & & & & &  & {\bf 1}  & {\bf 1}  & {\bf 2}  & {\bf 2}  & {\bf 7}  & {\bf 7}  & {\bf 23}  & {\bf 23}  & {\bf 68}  & {\bf 68}  & {\bf 200}  & {\bf 200}  & {\bf 615}  & {\bf 615}  & {\bf 1764}  & {\bf 1764} \\r=32 & & & & & & & & & & & & & &  & {\bf 1}  & {\bf 1}  & {\bf 2}  & {\bf 2}  & {\bf 7}  & {\bf 7}  & {\bf 23}  & {\bf 23}  & {\bf 68}  & {\bf 68}  & {\bf 200}  & {\bf 200}  & {\bf 615}  & {\bf 615} \\r=33 & & & & & & & & & & & & & & & &  & {\bf 1}  & {\bf 1}  & {\bf 2}  & {\bf 2}  & {\bf 7}  & {\bf 7}  & {\bf 23}  & {\bf 23}  & {\bf 68}  & {\bf 68}  & {\bf 200}  & {\bf 200} \\r=34 & & & & & & & & & & & & & & & & & &  & {\bf 1}  & {\bf 1}  & {\bf 2}  & {\bf 2}  & {\bf 7}  & {\bf 7}  & {\bf 23}  & {\bf 23}  & {\bf 68}  & {\bf 68} \\r=35 & & & & & & & & & & & & & & & & & & & &  & {\bf 1}  & {\bf 1}  & {\bf 2}  & {\bf 2}  & {\bf 7}  & {\bf 7}  & {\bf 23}  & {\bf 23} \\r=36 & & & & & & & & & & & & & & & & & & & & & &  & {\bf 1}  & {\bf 1}  & {\bf 2}  & {\bf 2}  & {\bf 7}  & {\bf 7} \\r=37 & & & & & & & & & & & & & & & & & & & & & & & &  & {\bf 1}  & {\bf 1}  & {\bf 2}  & {\bf 2} \\r=38 & & & & & & & & & & & & & & & & & & & & & & & & & &  & {\bf 1}  & {\bf 1} \\\hline\end{tabular}

\\\\\begin{tabular}{|c|c|c|c|c|c|c|c|c|c|c|c|c|c|c|c|c|c|c|c|c|c|c|c|}
\hline \mbox{r$\setminus$ g}&g=78& g=79& g=80& g=81& g=82& g=83& g=84& g=85& g=86& g=87& g=88& g=89& g=90& g=91& g=92& g=93& g=94& g=95& g=96& g=97& g=98& g=99& g=100\\\hline
r=0 & {\bf 1}  & {\bf 1}  & {\bf 1}  & {\bf 1}  & {\bf 1}  & {\bf 1}  & {\bf 1}  & {\bf 1}  & {\bf 1}  & {\bf 1}  & {\bf 1}  & {\bf 1}  & {\bf 1}  & {\bf 1}  & {\bf 1}  & {\bf 1}  & {\bf 1}  & {\bf 1}  & {\bf 1}  & {\bf 1}  & {\bf 1}  & {\bf 1}  & {\bf 1} \\r=1 & {\bf 2262}  & {\bf 2340}  & {\bf 2380}  & {\bf 2460}  & {\bf 2501}  & {\bf 2583}  & {\bf 2625}  & {\bf 2709}  & {\bf 2752}  & {\bf 2838}  & {\bf 2882}  & {\bf 2970}  & {\bf 3015}  & {\bf 3105}  & {\bf 3151}  & {\bf 3243}  & {\bf 3290}  & {\bf 3384}  & {\bf 3432}  & {\bf 3528}  & {\bf 3577}  & {\bf 3675}  & {\bf 3725} \\r=2 & ? & ? & ? & ? & ? & ? & ? & ? & ? & ? & ? & ? & ? & ? & ? & ? & ? & ? & ? & ? & ? & ? & ? \\r=3 & ? & ? & ? & ? & ? & ? & ? & ? & ? & ? & ? & ? & ? & ? & ? & ? & ? & ? & ? & ? & ? & ? & ? \\r=4 & ? & ? & ? & ? & ? & ? & ? & ? & ? & ? & ? & ? & ? & ? & ? & ? & ? & ? & ? & ? & ? & ? & ? \\r=5 & ? & ? & ? & ? & ? & ? & ? & ? & ? & ? & ? & ? & ? & ? & ? & ? & ? & ? & ? & ? & ? & ? & ? \\r=6 & ? & ? & ? & ? & ? & ? & ? & ? & ? & ? & ? & ? & ? & ? & ? & ? & ? & ? & ? & ? & ? & ? & ? \\r=7 & ? & ? & ? & ? & ? & ? & ? & ? & ? & ? & ? & ? & ? & ? & ? & ? & ? & ? & ? & ? & ? & ? & ? \\r=8 & ? & ? & ? & ? & ? & ? & ? & ? & ? & ? & ? & ? & ? & ? & ? & ? & ? & ? & ? & ? & ? & ? & ? \\r=9 & ? & ? & ? & ? & ? & ? & ? & ? & ? & ? & ? & ? & ? & ? & ? & ? & ? & ? & ? & ? & ? & ? & ? \\r=10 & ? & ? & ? & ? & ? & ? & ? & ? & ? & ? & ? & ? & ? & ? & ? & ? & ? & ? & ? & ? & ? & ? & ? \\r=11 & ? & ? & ? & ? & ? & ? & ? & ? & ? & ? & ? & ? & ? & ? & ? & ? & ? & ? & ? & ? & ? & ? & ? \\r=12 & ? & ? & ? & ? & ? & ? & ? & ? & ? & ? & ? & ? & ? & ? & ? & ? & ? & ? & ? & ? & ? & ? & ? \\r=13 & ? & ? & ? & ? & ? & ? & ? & ? & ? & ? & ? & ? & ? & ? & ? & ? & ? & ? & ? & ? & ? & ? & ? \\r=14 & ? & ? & ? & ? & ? & ? & ? & ? & ? & ? & ? & ? & ? & ? & ? & ? & ? & ? & ? & ? & ? & ? & ? \\r=15 & ? & ? & ? & ? & ? & ? & ? & ? & ? & ? & ? & ? & ? & ? & ? & ? & ? & ? & ? & ? & ? & ? & ? \\r=16 & ? & ? & ? & ? & ? & ? & ? & ? & ? & ? & ? & ? & ? & ? & ? & ? & ? & ? & ? & ? & ? & ? & ? \\r=17 & ? & ? & ? & ? & ? & ? & ? & ? & ? & ? & ? & ? & ? & ? & ? & ? & ? & ? & ? & ? & ? & ? & ? \\r=18 & ? & ? & ? & ? & ? & ? & ? & ? & ? & ? & ? & ? & ? & ? & ? & ? & ? & ? & ? & ? & ? & ? & ? \\r=19 & ? & ? & ? & ? & ? & ? & ? & ? & ? & ? & ? & ? & ? & ? & ? & ? & ? & ? & ? & ? & ? & ? & ? \\r=20 & ? & ? & ? & ? & ? & ? & ? & ? & ? & ? & ? & ? & ? & ? & ? & ? & ? & ? & ? & ? & ? & ? & ? \\r=21 & ? & ? & ? & ? & ? & ? & ? & ? & ? & ? & ? & ? & ? & ? & ? & ? & ? & ? & ? & ? & ? & ? & ? \\r=22 & ? & ? & ? & ? & ? & ? & ? & ? & ? & ? & ? & ? & ? & ? & ? & ? & ? & ? & ? & ? & ? & ? & ? \\r=23 & ? & ? & ? & ? & ? & ? & ? & ? & ? & ? & ? & ? & ? & ? & ? & ? & ? & ? & ? & ? & ? & ? & ? \\r=24 & ? & ? & ? & ? & ? & ? & ? & ? & ? & ? & ? & ? & ? & ? & ? & ? & ? & ? & ? & ? & ? & ? & ? \\r=25 & ? & ? & ? & ? & ? & ? & ? & ? & ? & ? & ? & ? & ? & ? & ? & ? & ? & ? & ? & ? & ? & ? & ? \\r=26 & ? & ? & ? & ? & ? & ? & ? & ? & ? & ? & ? & ? & ? & ? & ? & ? & ? & ? & ? & ? & ? & ? & ? \\r=27 & {\bf 332475}  & {\bf 332475} & ? & ? & ? & ? & ? & ? & ? & ? & ? & ? & ? & ? & ? & ? & ? & ? & ? & ? & ? & ? & ? \\r=28 & {\bf 117573}  & {\bf 117573}  & {\bf 332475}  & {\bf 332475}  & {\bf 933891} & ? & ? & ? & ? & ? & ? & ? & ? & ? & ? & ? & ? & ? & ? & ? & ? & ? & ? \\r=29 & {\bf 41785}  & {\bf 41785}  & {\bf 117573}  & {\bf 117573}  & {\bf 332475}  & {\bf 332475}  & {\bf 933891}  & {\bf 933891} & ? & ? & ? & ? & ? & ? & ? & ? & ? & ? & ? & ? & ? & ? & ? \\r=30 & {\bf 14626}  & {\bf 14626}  & {\bf 41785}  & {\bf 41785}  & {\bf 117573}  & {\bf 117573}  & {\bf 332475}  & {\bf 332475}  & {\bf 933891}  & {\bf 933891}  & {\bf 2609832} & ? & ? & ? & ? & ? & ? & ? & ? & ? & ? & ? & ? \\r=31 & {\bf 5060}  & {\bf 5060}  & {\bf 14626}  & {\bf 14626}  & {\bf 41785}  & {\bf 41785}  & {\bf 117573}  & {\bf 117573}  & {\bf 332475}  & {\bf 332475}  & {\bf 933891}  & {\bf 933891}  & {\bf 2609832}  & {\bf 2609832} & ? & ? & ? & ? & ? & ? & ? & ? & ? \\r=32 & {\bf 1764}  & {\bf 1764}  & {\bf 5060}  & {\bf 5060}  & {\bf 14626}  & {\bf 14626}  & {\bf 41785}  & {\bf 41785}  & {\bf 117573}  & {\bf 117573}  & {\bf 332475}  & {\bf 332475}  & {\bf 933891}  & {\bf 933891}  & {\bf 2609832}  & {\bf 2609832}  & {\bf $f_{15}$} & ? & ? & ? & ? & ? & ? \\r=33 & {\bf 615}  & {\bf 615}  & {\bf 1764}  & {\bf 1764}  & {\bf 5060}  & {\bf 5060}  & {\bf 14626}  & {\bf 14626}  & {\bf 41785}  & {\bf 41785}  & {\bf 117573}  & {\bf 117573}  & {\bf 332475}  & {\bf 332475}  & {\bf 933891}  & {\bf 933891}  & {\bf 2609832}  & {\bf 2609832}  & {\bf $f_{15}$}  & {\bf $f_{15}$} & ? & ? & ? \\r=34 & {\bf 200}  & {\bf 200}  & {\bf 615}  & {\bf 615}  & {\bf 1764}  & {\bf 1764}  & {\bf 5060}  & {\bf 5060}  & {\bf 14626}  & {\bf 14626}  & {\bf 41785}  & {\bf 41785}  & {\bf 117573}  & {\bf 117573}  & {\bf 332475}  & {\bf 332475}  & {\bf 933891}  & {\bf 933891}  & {\bf 2609832}  & {\bf 2609832}  & {\bf $f_{15}$}  & {\bf $f_{15}$}  & {\bf $f_{16}$} \\r=35 & {\bf 68}  & {\bf 68}  & {\bf 200}  & {\bf 200}  & {\bf 615}  & {\bf 615}  & {\bf 1764}  & {\bf 1764}  & {\bf 5060}  & {\bf 5060}  & {\bf 14626}  & {\bf 14626}  & {\bf 41785}  & {\bf 41785}  & {\bf 117573}  & {\bf 117573}  & {\bf 332475}  & {\bf 332475}  & {\bf 933891}  & {\bf 933891}  & {\bf 2609832}  & {\bf 2609832}  & {\bf $f_{15}$} \\r=36 & {\bf 23}  & {\bf 23}  & {\bf 68}  & {\bf 68}  & {\bf 200}  & {\bf 200}  & {\bf 615}  & {\bf 615}  & {\bf 1764}  & {\bf 1764}  & {\bf 5060}  & {\bf 5060}  & {\bf 14626}  & {\bf 14626}  & {\bf 41785}  & {\bf 41785}  & {\bf 117573}  & {\bf 117573}  & {\bf 332475}  & {\bf 332475}  & {\bf 933891}  & {\bf 933891}  & {\bf 2609832} \\r=37 & {\bf 7}  & {\bf 7}  & {\bf 23}  & {\bf 23}  & {\bf 68}  & {\bf 68}  & {\bf 200}  & {\bf 200}  & {\bf 615}  & {\bf 615}  & {\bf 1764}  & {\bf 1764}  & {\bf 5060}  & {\bf 5060}  & {\bf 14626}  & {\bf 14626}  & {\bf 41785}  & {\bf 41785}  & {\bf 117573}  & {\bf 117573}  & {\bf 332475}  & {\bf 332475}  & {\bf 933891} \\r=38 & {\bf 2}  & {\bf 2}  & {\bf 7}  & {\bf 7}  & {\bf 23}  & {\bf 23}  & {\bf 68}  & {\bf 68}  & {\bf 200}  & {\bf 200}  & {\bf 615}  & {\bf 615}  & {\bf 1764}  & {\bf 1764}  & {\bf 5060}  & {\bf 5060}  & {\bf 14626}  & {\bf 14626}  & {\bf 41785}  & {\bf 41785}  & {\bf 117573}  & {\bf 117573}  & {\bf 332475} \\r=39 & {\bf 1}  & {\bf 1}  & {\bf 2}  & {\bf 2}  & {\bf 7}  & {\bf 7}  & {\bf 23}  & {\bf 23}  & {\bf 68}  & {\bf 68}  & {\bf 200}  & {\bf 200}  & {\bf 615}  & {\bf 615}  & {\bf 1764}  & {\bf 1764}  & {\bf 5060}  & {\bf 5060}  & {\bf 14626}  & {\bf 14626}  & {\bf 41785}  & {\bf 41785}  & {\bf 117573} \\r=40 & &  & {\bf 1}  & {\bf 1}  & {\bf 2}  & {\bf 2}  & {\bf 7}  & {\bf 7}  & {\bf 23}  & {\bf 23}  & {\bf 68}  & {\bf 68}  & {\bf 200}  & {\bf 200}  & {\bf 615}  & {\bf 615}  & {\bf 1764}  & {\bf 1764}  & {\bf 5060}  & {\bf 5060}  & {\bf 14626}  & {\bf 14626}  & {\bf 41785} \\r=41 & & & &  & {\bf 1}  & {\bf 1}  & {\bf 2}  & {\bf 2}  & {\bf 7}  & {\bf 7}  & {\bf 23}  & {\bf 23}  & {\bf 68}  & {\bf 68}  & {\bf 200}  & {\bf 200}  & {\bf 615}  & {\bf 615}  & {\bf 1764}  & {\bf 1764}  & {\bf 5060}  & {\bf 5060}  & {\bf 14626} \\r=42 & & & & & &  & {\bf 1}  & {\bf 1}  & {\bf 2}  & {\bf 2}  & {\bf 7}  & {\bf 7}  & {\bf 23}  & {\bf 23}  & {\bf 68}  & {\bf 68}  & {\bf 200}  & {\bf 200}  & {\bf 615}  & {\bf 615}  & {\bf 1764}  & {\bf 1764}  & {\bf 5060} \\r=43 & & & & & & & &  & {\bf 1}  & {\bf 1}  & {\bf 2}  & {\bf 2}  & {\bf 7}  & {\bf 7}  & {\bf 23}  & {\bf 23}  & {\bf 68}  & {\bf 68}  & {\bf 200}  & {\bf 200}  & {\bf 615}  & {\bf 615}  & {\bf 1764} \\r=44 & & & & & & & & & &  & {\bf 1}  & {\bf 1}  & {\bf 2}  & {\bf 2}  & {\bf 7}  & {\bf 7}  & {\bf 23}  & {\bf 23}  & {\bf 68}  & {\bf 68}  & {\bf 200}  & {\bf 200}  & {\bf 615} \\r=45 & & & & & & & & & & & &  & {\bf 1}  & {\bf 1}  & {\bf 2}  & {\bf 2}  & {\bf 7}  & {\bf 7}  & {\bf 23}  & {\bf 23}  & {\bf 68}  & {\bf 68}  & {\bf 200} \\r=46 & & & & & & & & & & & & & &  & {\bf 1}  & {\bf 1}  & {\bf 2}  & {\bf 2}  & {\bf 7}  & {\bf 7}  & {\bf 23}  & {\bf 23}  & {\bf 68} \\r=47 & & & & & & & & & & & & & & & &  & {\bf 1}  & {\bf 1}  & {\bf 2}  & {\bf 2}  & {\bf 7}  & {\bf 7}  & {\bf 23} \\r=48 & & & & & & & & & & & & & & & & & &  & {\bf 1}  & {\bf 1}  & {\bf 2}  & {\bf 2}  & {\bf 7} \\r=49 & & & & & & & & & & & & & & & & & & & &  & {\bf 1}  & {\bf 1}  & {\bf 2} \\r=50 & & & & & & & & & & & & & & & & & & & & & &  & {\bf 1} \\\hline\end{tabular}

\\\\\end{tabular}
}
\end{center}
\end{table}


\end{document}